\newtheorem{proposition}{Proposition}[section]
\newtheorem{lemma}[proposition]{Lemma}
\newtheorem{theorem}[proposition]{Theorem}
\newtheorem{corollary}[proposition]{Corollary}
\newtheorem{conjecture}{Conjecture}[section]
\theoremstyle{definition}
\newtheorem{remark}[proposition]{Remark}
\newtheorem{definition}[proposition]{Definition}
\newtheorem{example}[proposition]{Example}
\DeclareMathOperator{\Aut}{Aut}
\DeclareMathOperator{\Ric}{Ric}
\DeclareMathOperator{\DF}{DF}
\DeclareMathOperator{\Ima}{Im}
\DeclareMathOperator{\Rea}{Re}
\DeclareMathOperator{\Lie}{Lie}
\DeclareMathOperator{\Fix}{Fix}
\newcommand{\w}{\omega} 
\newcommand{\Ricw}{\text{Ric}\hspace{0.35mm}\omega} 
\newcommand{\RicW}{\text{Ric}\hspace{0.35mm}\Omega} 
\newcommand{\lap}{\Delta} 
\newcommand{\del}{\partial} 
\newcommand{\delbar}{\overline{\partial}} 
\newcommand{\W}{\Omega} 
\newcommand{\testconf}{\left(\mathcal{X},\mathcal{A}\right)} 
\newcommand{\RicFS}{\pi^*\text{Ric}\hspace{0.4mm}\omega_{FS}} 
\newcommand{\hFS}{h_{FS}} 
\newcommand{\C}{\mathbb{C}} 
\newcommand{\R}{\mathbb{R}} 
\newcommand{\proj}{\mathbb{P}} 
\newcommand{\g}{{\mathfrak{g}}}  
\newcommand{\acts}{\circlearrowright} 
\newcommand{\euler}{e}
\newcommand{\DFklt}{Z_{k,l}\testconf}
\newcommand{\X}{\mathcal{X}}
\newcommand{\Fklt}{\mathcal{F}^Z_{k,l}}
\newcommand{\A}{\mathcal{A}}
\renewcommand{\L}{\mathcal{L}}
\renewcommand{\O}{\mathcal{O}}
\newcommand{\Zhat}{\mathcal{Z}(\X,\A)}
\newcommand{\Aeq}{\A_{\text{eq}}}
\newcommand{\DFhat}{\mathcal{Z}_{k,l}(\X,\A)}
\newcommand{\muklt}{\mu_{k,l}}
\newcommand{\Fhat}{\mathcal{F}^Z_{k,l}}
\newcommand{\ceq}{c_1(\X/\proj^1)_{\text{eq}}}
\title{Equivariant localisation in the theory of $Z$-stability for Kähler manifolds}
\author{Alexia Corradini}
\email{alexia.corradini@ip-paris.fr}
\numberwithin{equation}{section}
\begin{document}
	
\begin{abstract} 
		We apply equivariant localisation to the theory of $Z$-stability and $Z$-critical metrics on a Kähler manifold $(X,\alpha)$, where $\alpha$ is a Kähler class. 
		We show that the invariants used to determine $Z$-stability of the manifold, which are integrals over test configurations, can be written as a product of equivariant classes, hence equivariant localisation can be applied. 
		We also study the existence of $Z$-critical Kähler metrics in $\alpha$, whose existence is conjectured to be equivalent to $Z$-stability of $(X,\alpha)$. In particular, we study a class of invariants that give an obstruction to the existence of such metrics. Then we show that these invariants can also be written as a product of equivariant classes.
		From this we give a new, more direct proof of an existing result: the former invariants determining $Z$-stability on a test configuration are equal to the latter invariants related to the existence of $Z$-critical metrics on the central fibre of the test configuration. This provides a new approach from which to derive the $Z$-critical equation.
\end{abstract}
	
	\maketitle
	
\section{Introduction}
	
	Stability conditions are a central subject of interest in algebraic geometry, in part due to their important role in the construction of moduli spaces. Over the past two decades, the notion of Bridgeland stability \cite{bridgeland}, defined on any triangulated category and generalizing the simpler notion of slope stability of a vector bundle, has been extremely popular for this reason. A lot of information can be obtained on the birational geometry of a moduli space of Bridgeland stable objects by varying the stability condition itself \cite{geometry_moduli_spaces}, explaining why general notions of stability are so useful.

	For different reasons, the notion of K-stability for a polarised variety has also been a central subject of research over the past 30 years. Originally introduced in the context of Kähler geometry, it was constructed progressively by Tian \cite{tian-inventiones}, then in a more algebro-geometric fashion by Donaldson \cite{donaldson}. Its purpose was to give an obstruction to the existence of cscK metrics on a Kähler manifold, leading to the famous Yau--Tian--Donaldson conjecture \cite{yau_openproblems,tian-inventiones,donaldson}. The conjecture relates an algebro-geometric notion of stability (namely K-stability), to existence of solutions to a geometric partial differential equation (namely cscK metrics). Another example is the Kobayashi--Hitchin correspondence \cite{kobaya-hitchin}, relating the existence of hermitian Yang-Mills connections on a vector bundle to its slope stability.

	This duality has motivated Dervan--McCarthy--Sektnan \cite{Dervan2020} to develop a more general framework in which solutions to geometric partial differential equations are characterised by stability conditions. They define notions of stability modelled on a subset of Bridgeland stability conditions on the derived category of coherent sheaves. Roughly speaking, this subset arises from \textit{Bayer's polynomial stability conditions} \cite{Bayer2009}, and involves a choice of \textit{central charge}, denoted by $Z$. The corresponding stability condition is $Z$-stability. They then associate to a central charge a geometric partial differential equation involving the curvature of a connection on the vector bundle, solutions of which are called $Z$-critical connections. Their theory extends the Kobayashi--Hitchin correspondence by conjecturally relating the existence of $Z$-critical connections on a vector bundle to its $Z$-stability.

	Further work was done by Dervan \cite{Dervan2021}, this time on the category of polarised varieties. In this framework, given a polarised variety $(X,L)$, the natural metrics are Kähler metrics, hence one expects to get a geometric partial differential equation for metrics in $c_1(L)$. This then leads to a conjectured correspondence between $Z$-stability of a polarised variety, and existence of solutions in $c_1(L)$ to a geometric partial differential equation, called $Z$-critical metrics. This conjectured correspondence extends the Yau--Tian--Donaldson conjecture to a broader family of stability conditions, of which K-stability is a special case.

	In the case of a polarised variety $(X,L)$, for a central charge $Z$, the notion of $Z$-stability is defined analogously to K-stability using the framework of test configurations, which are $\C^*$-degenerations $(\X,\mathcal{L})$ of $(X,L)$. Given a central charge, the corresponding notion of $Z$-stability is determined by the positivity of the number 
	\begin{equation}\label{stable_inv}
		\Ima\left(e^{-i\varphi}Z(\X,\mathcal{L})\right)
	\end{equation}
	defined on the test configuration, where $\varphi$ is the argument of the central charge $Z(X,L)\in\C$, and $Z(\X,\mathcal{L})\in\C$ is an integral invariant of the test configuration. In turn, $Z$-critical metrics are defined as metrics in $c_1(L)$ satisfying $$\Ima\left(e^{-i\varphi}\tilde{Z}(X,\w)\right)=0,$$ where $\tilde{Z}(X,\w)$ is a function of the metric $\w\in c_1(L)$, also depending on the central charge $Z(X,L)$. The approach used by Dervan to derive this partial differential equation is quite indirect, and involves deriving the Euler-Lagrange equations for an associated energy functional. The function $$\tilde{Z}(X,\w)$$ obtained in this way contains more than just Chern-Weil representatives, but also a laplacian term, to which it is difficult to assign a geometric intuition.

	As mentioned before, this framework generalises that of K-stability and cscK metrics. In this special case, the invariant \eqref{stable_inv} is the Donaldson-Futaki invariant (up to a normalisation, as we will see), explicitly given by: $$\DF(\X,\mathcal{L})=\frac{n\mu}{n+1}\mathcal{L}^{n+1}-\mathcal{L}^n\cdot K_{\X/\proj^1},$$ where $\mu=\frac{-K_X\cdot L^{n-1}}{L^n}$, and $n=\text{dim}(X)$. The original interpretation given by Donaldson for this invariant was in terms of coefficients of weight polynomials for the induced action on $H^0(\X_0,\L_0^k)$. Our definition stems from a later, more geometric interpretation of $\DF(\X,\L)$ as an intersection number on $(\X,\L)$, which was first suggested by Wang \cite{Wang2012} and Odaka \cite{Odaka2013}.  
	
	We also recall one of the early obstructions to the existence of cscK metrics on any Kähler manifold $(X,\alpha)$, where $\alpha$ is a Kähler class, namely the \textit{Futaki invariant}, introduced by Futaki \cite{futaki}. It is defined as follows, for any $\w\in\alpha$:
	\begin{align}
		\mathcal{F}:\Lie(\Aut(X))&\rightarrow \R \notag\\
		V&\mapsto \int_Xf_V\left(S(\w)-\hat{S}\right)\w^n \notag
	\end{align}
	where $S(\w)$ is the scalar curvature of $\w$, $\hat{S}$ its average, and $f_V$ is a hamiltonian-type function which we define later. The Futaki invariant is independent of the choice of $\w\in\alpha$ \cite[Theorem 4.12]{gabor}, and hence vanishes if there exists a cscK metric in $\alpha$.
	
	Returning to the framework of polarised varieties, in the case where the central fibre $(\X_0,\mathcal{L}_0)$ of a test configuration $(\X,\mathcal{L})$ is smooth, Donaldson \cite{donaldson} proved the following equality between the Donaldson--Futaki invariant on $(\X,\mathcal{L})$, and the Futaki invariant on the central fibre $(\X_0,\mathcal{L}_0)$: 
	\begin{equation}\label{legendre_main_theorem}
		\DF(\X,\mathcal{L})=-\pi\mathcal{F}(\X_0,\mathcal{L}_0)(V_0),
	\end{equation}
	where $V_0$ is the generator of the $S^1$-action on $\X$ restricted to $\X_0$. His proof relies on his original interpretation of $\DF(\X,\L)$ in terms of coefficients of weight polynomials, which is different - although equivalent - to the interesection-theoretic definition we use here. 
	
	More recently, Legendre \cite{Legendre2020} has used equivariant localisation, which we introduce in Section \ref{equivariant_background}, to prove this equality. It had been known and suggested by Futaki in his original paper that the Futaki invariant can be written as a product of equivariant classes on a Kähler manifold. Legendre noticed that the Donaldson-Futaki invariant can also be written as a product of equivariant classes on a test configuration, in a very similar way. Applying localisation then tells us that the only contributions from this invariant are contained in the fixed locus of the $S^1$-action on $\X$, from which she recovers \eqref{legendre_main_theorem}. The fact that this proof uses the intersection-theoretic definition of $\text{DF}(\X,\L)$ instead of the weight polynomial interpretation gives a rich differential-geometric insight on the invariant.
	
	A natural question is thus whether this method can be applied to the more general notions of $Z$-stability, i.e. if the $Z(\X,\mathcal{L})$ can be written as an equivariant class product for any central charge $Z(X,L)$ on a polarised variety $(X,L)$. We answer this question positively in this paper, and use it to give a new proof of a result analogous to \eqref{legendre_main_theorem} in the theory of $Z$-stability, originally proved by Dervan \cite{Dervan2021}. 
	\begin{theorem}\label{main_theorem}
		Let $(\X,\L)$ be a smooth compact test configuration for a polarised variety $(X,L)$, and $Z(X,L)$ be a central charge. If the central fibre $(\X_0,\L_0)$ is smooth, then the following equality holds for any $\w\in c_1(\mathcal{L}_0)$:
		\begin{align}
			\Ima\left(e^{-i\varphi}Z(\X,\mathcal{L})\right)=-2\pi\Ima\left(e^{-i\varphi}\tilde{Z}(\X_0,\w)\right).
		\end{align}
	\end{theorem}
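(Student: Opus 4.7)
The plan is to extend Legendre's equivariant-localisation strategy from the Donaldson--Futaki case to the full $Z$-stability setting. The idea is to realise both sides of the equality as equivariant integrals, and then apply the Atiyah--Bott--Berline--Vergne (ABBV) localisation formula to reduce the integral defining $Z(\X,\L)$ over the test configuration to one over the fixed locus of the $S^1$-action, which sits inside the central fibre $\X_0$.

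First, I would realise $Z(\X,\L)$ as an integral of an equivariant class product on a smooth compactification of $\X$. Since a central charge is a polynomial expression in Chern characters and the canonical class paired with powers of $L$, each monomial in $Z(\X,\L)$ admits a canonical lift obtained by replacing ordinary Chern classes of $\L$ and of the relative tangent bundle $T_{\X/\proj^1}$ with their $S^1$-equivariant counterparts with respect to the action defining the test configuration. This is the direct analogue of Legendre's equivariant rewriting of $\DF(\X,\L)$, and the same abstract argument should work term-by-term.

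The second and more delicate step is to write $\Ima(e^{-i\varphi}\tilde{Z}(\X_0,\w))$ itself as an equivariant integral on $\X_0$. This is subtle because, as noted in the introduction, $\tilde{Z}(X,\w)$ contains a Laplacian-type term which is not manifestly a Chern--Weil representative. The key idea is to use a moment-map identity for the hamiltonian $f_{V_0}$ of $V_0$ to reinterpret this term in the Cartan model as the equivariant contribution of $V_0$ to an equivariantly closed form, after which the whole expression becomes an honest equivariant integral of characteristic classes evaluated at $V_0$.

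With both sides written equivariantly, the final step is ABBV localisation on the smooth compactification of $\X$. The fixed locus decomposes into components inside $\X_0$ together with a copy of $X$ over $\infty\in\proj^1$ with trivial action; the contribution from the latter is real after multiplication by $e^{-i\varphi}$ by the very definition of $\varphi$, and therefore drops out in the imaginary part. The residual contribution from $\X_0$, once the normal direction to $\X_0\subset\X$ is integrated out, matches the equivariant expression for $\tilde{Z}(\X_0,\w)(V_0)$ up to the overall constant $-2\pi$ coming from the localisation weight on $\proj^1$. The main obstacle I expect lies in the second step: identifying the correct equivariantly closed form on $\X_0$ whose integral reproduces both the Chern--Weil and Laplacian pieces of $\tilde Z$, and then checking monomial-by-monomial that the localisation residue of $Z(\X,\L)$ at $\X_0$ is indeed this form. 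This combinatorial identity is essentially the geometric content underlying the derivation of the $Z$-critical equation itself.
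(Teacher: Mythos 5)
Your proposal follows essentially the same route as the paper: rewrite $Z(\X,\L)$ term-by-term as a product of equivariant classes (with $\A_{\text{eq}}=[\W-h]$ and the relative first Chern class extended by $\lap h-\pi^*\lap h_{FS}$), absorb the Laplacian term of $\tilde Z$ into the equivariantly closed form $\Ric\w+\lap h$ via the identity $\iota_V\Ric\w=d\lap h$ together with self-adjointness of $\lap$, and then apply Atiyah--Bott so that only the fixed locus in $\X_0$ and the fibre at infinity contribute, with the weight-one normal direction to $\X_0$ producing the factor $-2\pi$. The one point where the paper proceeds slightly differently is the fibre at infinity: rather than arguing, as you do, that its contribution is a real multiple of $Z(X,L)$ and so drops out of the imaginary part (which is true but still requires checking that $(\lap h-\pi^*\lap h_{FS})$ vanishes there and that the restrictions of $\A$, $c_1(\X/\proj^1)$ and the extended auxiliary class $\Theta_\X$ recover $\alpha$, $c_1(X)$ and $\Theta$), it first subtracts the real normalisation $\frac{\A^{n+1}}{(n+1)\alpha^n}Z(X,\alpha)$ so that this contribution vanishes identically, and it also devotes some care to hypotheses on $\Theta$ (existence of a smooth equivariant extension to $\X$ and of a representative with $\iota_V\theta=0$) that your sketch leaves implicit.
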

\noindent We refer the reader to Theorem \ref{central_fibre_theorem} for a more precise statement.
 
This approach using equivariant cohomology provides a new understanding of the relation between stability defined from a test configuration and geometric partial differential equations. In the latter, it makes the appearance of the laplacian term much more natural; it follows directly from the fact that if $h$ is a hamiltonian for some vector field $V$ with Kähler metric $\w\in\alpha$, then $$\Ric\w+\lap h$$ is equivariantly closed, as we will discuss. It also provides a framework for deriving partial differential equations directly from the corresponding algebro-geometric stability condition. That is, one can actually derive the $Z$-critical equation from the associated algebro-geometric invariant using localisation. Furthermore, this suggests a way of deriving the partial differential equation in the more complicated case of central charges involving higher Chern classes of $X$, which is still not known. One could hope that the localisation framework we provide here would still apply; then one is left with the problem of extending a Chern-Weil representative of $c_k(X)$ to an equivariantly closed form.


\subsection*{Notations}
	
We will be using concepts and notations used by Dervan in his paper \cite{Dervan2021}. However, we will be translating from his algebro-geometric framework to fit our differential-geometric approach. Instead of a polarised variety $(X,L)$, we will consider a Kähler manifold $(X,\alpha)$, where in the projective case we imagine $\alpha$ to be $c_1(L)$. Intersection products will be replaced by products of the corresponding cohomology classes. Given an (equivariant or non-equivariant) class $\beta$ which we wish to integrate over $X$, we will very often write $\beta([X])$ simply by $\beta$.

\subsection*{Acknowledgements}
	
This project has been the subject of a first year Master's research intership, under the supervision of Ruadhaí Dervan at the University of Cambridge. I wish to warmly thank him for suggesting this idea, and for his insight and guidance throughout the process. I also thank Eveline Legendre for our exchanges about her paper, from which the idea for this project was born. I was funded by the Jacques Hadamard Mathematical Foundation, and by Dervan's Royal Society University Research Fellowship for the duration of this work.

\section{Preliminaries}

\subsection{Equivariant localisation}\label{equivariant_background}
	
In this section we introduce some of the basic definitions of equivariant cohomology, and state the Atiyah--Bott localisation formula. We refer to \cite{equiv_topol, notes_equiv, supersym}, for a more complete presentation. We will ultimately be interested in $S^1$-actions on a compact Kähler manifold, however in this subsection we will consider the more general situation of the action by a compact and connected Lie group $G$ on a complex (Kähler) manifold $(X,\w)$ of complex dimension $n$.
	
The study of the equivariant cohomology of the $G$-action $G\acts X$, written $H_G^*(X)$, dates back to Cartan and Weil in the early $1950$s, with significant developments by Atiyah and Bott in the $1980$s \cite{atiyah_bott}. It can be defined as the ordinary cohomology of the quotient $(X\times E)/G$, where $E$ is a contractible space on which $G$ acts freely (such a space always exists, see \cite[Proposition 3.1]{equiv_topol}). In the case where the $G$-action on $M$ is free, this coincides with the cohomology of $M/G$ \cite[Theorem 4.1]{equiv_topol}. We write:
\begin{equation}
	H_G^*(X):=H^*((X\times E)/G).\notag
\end{equation}

Cartan then developed a realisation of this cohomology in the language of differential forms, which is the one we will be interested in. Here we give some of the basic definitions:
\begin{definition}[Equivariant differential form]
	An \textit{equivariant differential form} is a $G$-equivariant map $$\alpha:\g\rightarrow \Omega^*(X),$$ with respect to the adjoint action on $\g$ and the pullback action on differentials forms.
\end{definition}
\begin{definition}[Equivariant differential]
	The \textit{equivariant differential} operator acts on maps $$\alpha:\g\rightarrow\Omega^*(X)$$ as follows:
	\begin{align}
		(d_{\text{eq}}\alpha)(V):=d\alpha(V)-\iota_V\alpha(V)\hspace{5mm}  \text{for all} \hspace{2mm} V\in\g. \notag
	\end{align}
\end{definition}
\begin{definition}[Equivariantly closed and exact forms]
	A map $$\alpha:\g\rightarrow \Omega^*(X)$$ is said to be \textit{equivariantly closed} if 
	$$d_{\text{eq}}\alpha(V)=0 \hspace{2mm}\text{for all}\hspace{0.5mm} V\in\g.$$
	It is said to be \textit{equivariantly exact} if there exists a map $\beta:\g\rightarrow\W^*(X)$ such that $$\alpha(V)=d_{\text{eq}}\beta(V)\hspace{2mm}\text{for all}\hspace{0.5mm} V\in\g.$$
\end{definition}
	
One can check that $d_{\text{eq}}^2$ vanishes on the space of equivariant differential forms \cite[Proposition 2.4]{intro_equiv}. Then it has been shown by Cartan that the associated chain complex computes the equivariant cohomology $H_G^*(X)$. A proof of this is given in \cite{equiv_deRham}, and a reprint of Cartan's original papers can be found in the book \cite{supersym}. 
	
\begin{example}\label{form1}
	Let $\mu$ be a moment map for a $G$-action $\phi:G\hookrightarrow\Aut(X)$ on the symplectic manifold $(X,\w)$, i.e. an equivariant map $\mu:X\rightarrow\g^*$ such that, for all $V\in\g$, $$\iota_V\w=-d\langle\mu, V\rangle,$$ where $\iota_V$ denotes the contraction with (the fundamental vector field) of $V$. Then it is immediate to check that the equivariant form 
	$$\w-\mu:V\in\g\mapsto\w-\langle\mu, V\rangle$$ is equivariantly closed.
\end{example}
	
\begin{example}\label{form2}
	Let $V$ be a hamiltonian vector field on $(M,\w)$, and $h$ a hamiltonian for its flow, i.e. $\iota_V\w=-dh$. We denote by $\Ric\w$ the Ricci form of $\w$, which is defined in section \ref{Z-metrics}. From the identity \cite[Lemma 28]{Szekelyhidi2012} $$\iota_V\Ric\w=d\lap h,$$ we immediatly get that $\Ric\w+\lap h$ is equivariantly closed. It is an equivariant differential form because $\Ric\w$ is invariant, and $\lap h$ is equivariant because $\lap$ is equivariant under isometries (as observed in \cite[Proposition 3.5]{McCarthy2022}).
\end{example}

We can now state the Atiyah--Bott localisation formula:
	
\begin{theorem}[Atiyah-Bott localisation formula, \cite{symp_topo, equiv_topol}]\label{localisation}
	Let $\alpha:\g\rightarrow\Omega^*(X)$ be an equivariantly closed equivariant form, i.e. $d_{\text{eq}}\alpha=0$. Write $\Fix_G(X)$ for the fixed locus of the $G$-action, written as a union of its connected components as $\Fix_G(X)=\bigsqcup_{\lambda\in\Lambda}Z_{\lambda}$. Then for any $V\in\g$, we have
	\begin{equation}
		\int_M\alpha(V)=(-2\pi)^n\sum_{\lambda\in\Lambda}\int_{Z_{\lambda}}\frac{\iota^*_{Z_{\lambda}}\alpha(V)}{\textbf{e}(N_{Z_{\lambda}}^M)(V)}, \notag
	\end{equation}
	where $N_{\lambda}^X$ is the normal bundle of $Z_{\lambda}$ inside $X$, and $\euler(N_{\lambda}^X)$ its equivariant Euler class.
\end{theorem}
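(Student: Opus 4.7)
The plan is to adapt the classical Berline--Vergne/Atiyah--Bott argument: on $X\setminus \Fix_G(X)$ the equivariantly closed form $\alpha(V)$ is equivariantly exact, so only a small neighbourhood of the fixed locus contributes to the integral, and this contribution is governed by the equivariant normal bundle of $\Fix_G(X)$.

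First I would fix a $G$-invariant Riemannian metric $g$ on $X$ (available by averaging since $G$ is compact), and write $\tilde V$ for the fundamental vector field of $V$. Define the $G$-invariant $1$-form $\theta := g(\tilde V,\cdot)$, so that $\iota_{\tilde V}\theta = |\tilde V|_g^2>0$ away from $\Fix_G(X)$ (after the standard reduction in which $V$ is taken to be a generic element of its centralizer, ensuring that its zero locus coincides with $\Fix_G(X)$). Computing
\[
d_{\text{eq}}\theta(V) = d\theta - |\tilde V|_g^2,
\]
the degree-zero piece is invertible on $X\setminus\Fix_G(X)$, so $d_{\text{eq}}\theta(V)$ itself is invertible there via the terminating geometric series in the nilpotent correction $d\theta/|\tilde V|_g^2$. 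Setting $\gamma := \alpha(V)\wedge\theta/d_{\text{eq}}\theta(V)$ and using $d_{\text{eq}}\alpha = 0$, the graded Leibniz rule gives $d_{\text{eq}}\gamma = \alpha(V)$ on the complement of the fixed locus.

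Next, for each component $Z_\lambda$ I fix a $G$-invariant tubular neighbourhood $U_\lambda^\epsilon$ of radius $\epsilon$, identified via the normal exponential map with a disk bundle in $N_{Z_\lambda}^X$. Since only the top-degree piece of $\alpha(V) = d_{\text{eq}}\gamma = d\gamma - \iota_V\gamma$ contributes to integration of forms of varying degree, Stokes' theorem applied on $X\setminus\bigsqcup_\lambda U_\lambda^\epsilon$ yields
\[
\int_X\alpha(V) = \sum_\lambda\int_{U_\lambda^\epsilon}\alpha(V) - \sum_\lambda\int_{\partial U_\lambda^\epsilon}\gamma,
\]
the first sum vanishing as $\epsilon\to 0$ by smoothness of $\alpha(V)$. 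On each tubular neighbourhood I would choose a $G$-invariant unitary connection on $N_{Z_\lambda}^X$ and use it to split $\theta$ near $Z_\lambda$ into a radial piece along the fibres and horizontal corrections tangent to $Z_\lambda$. The limit of the boundary integral reduces, after fibrewise integration over the unit sphere bundle of $N_{Z_\lambda}^X$, to a Gaussian-type integral that by standard Chern--Weil computations evaluates to a multiple of $\iota^*_{Z_\lambda}\alpha(V)/\mathbf{e}(N_{Z_\lambda}^X)(V)$; summing over $\lambda$ and tracking the conventions used in the paper for $d_{\text{eq}}$ produces the overall factor $(-2\pi)^n$.

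The main obstacle is this last fibrewise residue step: the signs, the powers of $2\pi$, and the precise identification of the fibre integral with the equivariant Euler class depend very sensitively on conventions, and computing it by hand with the explicit series inversion of $d_{\text{eq}}\theta(V)$ is error-prone. A cleaner alternative I would pursue in a careful write-up is to replace this computation by pairing $\alpha(V)$ with the Mathai--Quillen equivariant Thom form $U_\lambda\in\Omega^*_G(N_{Z_\lambda}^X)$: this form is equivariantly closed globally, has fibrewise integral $1$, and its restriction to the zero section represents the equivariant Euler class, so the normalisation constant drops out cleanly and one avoids dealing with boundary limits altogether.
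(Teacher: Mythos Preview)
The paper does not prove this theorem: it is stated with citations to \cite{symp_topo, equiv_topol} and used as a black box throughout, so there is no ``paper's own proof'' to compare against. Your sketch is a faithful outline of the standard Berline--Vergne argument and is the proof one would find in those references.

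A few comments on the sketch itself. First, the statement as written in the paper is for \emph{any} $V\in\g$, whereas your construction of $\theta = g(\tilde V,\cdot)$ requires that the zero set of $\tilde V$ coincide with $\Fix_G(X)$; you gesture at this with ``the standard reduction in which $V$ is taken to be a generic element of its centralizer,'' but in a full write-up you would need either to restrict to such $V$ from the outset or to argue by polynomial dependence in $V$ that the generic case implies the general one. Second, as you correctly flag, the fibrewise limit is where all the content lies: the overall constant $(-2\pi)^n$ (rather than, say, $(2\pi)^n$ or $(2\pi i)^n$) is tied to the paper's specific sign convention $d_{\text{eq}}\alpha = d\alpha - \iota_V\alpha$ and its definition \eqref{euler_class} of the equivariant Euler class, and would need to be checked carefully against those. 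Your suggestion to bypass the boundary computation via the Mathai--Quillen Thom form is sound and is exactly how one avoids the bookkeeping you worry about.
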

\noindent For a discussion on the equivariant Euler class, see \cite[Section 5.3]{notes_equiv}.

\begin{remark}
	The theorem actually holds more generally for maps $$\alpha:\g\rightarrow\W^*(X)$$ which are equivariantly closed, but not necessarily equivariant.
\end{remark}
	
One can show that, in the case of an $S^1$-action, the connected components of $\Fix_{S^1}(X)$ are smooth even-dimensional submanifolds of $X$ \cite[§ 8.5.3]{supersym}. In particular, for any connected component $Z_{\lambda} \in \Fix_{S^1}(X)$ of complex dimension $n_{\lambda}$, and at any point $z\in Z_{\lambda}$, $N_{\lambda}^X$ locally splits into complex lines: $$N_{\lambda}^X\cong\bigoplus_{k=1}^{n_Z}L_k,$$ on which $S^1$ acts with eigenvalue $iw_k$, where $w_k\in\R$ is called the \textit{weight} of the action on $L_k$.
	
In the case where this splitting into line bundles is global, the equivariant Euler class can be written as follows, where $\del_{\theta}$ denotes a generator of $\text{Lie}(S^1)$:

\begin{equation}\label{euler_class}
	\euler(N_Z^X)(\del_{\theta})=\prod_{k=1}^{n-n_Z}\left(2\pi c_1(L_k)-w_k\right).
\end{equation}
	
\begin{remark}
	Because we will only be interested in $S^1$-actions, we will generally omit $\del_{\theta}$ as an input, and just write $\euler(N_Z^X)$ for $\euler(N_Z^X)(\del_{\theta})$.
\end{remark}
	
In what follows, we will also make good use of the following formula for $\lap h$ at any point $z\in Z_{\lambda}\in\Fix_{S^1}(X)$, where $h$ is a hamiltonian for the $S^1$-action:
\begin{equation}\label{laplacian_h}
	\lap h=-2\sum_{k=1}^{n-n_Z}w_k.
\end{equation}
\noindent A proof of this is given in \cite[Lemma 4.5]{Boyer2018}.

\subsection{K-stability and cscK metrics}
	
We work on a Kähler manifold $(X,\alpha)$, where $\alpha\in H^{1,1}(X,\mathbb{R})$ denotes the Kähler class. K-stability and, as we will later see, $Z$-stability, are defined using \textit{test configurations}, which we introduce here. We only consider compact test configurations, because we will ultimately be computing integrals over their total space, and use a differential-geometric definition. For the same reason, we will also only consider smooth test configurations. We will state the Yau-Tian-Donaldson conjecture in this framework, but wish to emphasise that to obtain a fully accurate statement, one should really allow singular test configurations. More details about compactifying test configurations can be found in \cite[Section 2]{Boucksom2016}.
	
\begin{definition}[Test configuration]\label{test_configuration}
	A \textit{test configuration} for a Kähler manifold $(X,\alpha)$ is:
	\begin{enumerate}[label=(\roman*)]
		\item a compact complex manifold $\testconf$ with a class $\A\in H^{1,1}(X,\mathbb{R})$ and a $\C^*$-action $\C^*\acts X$ that preserves $\A$; 
		\item a $\C^*$-equivariant surjective map $\pi:\X\rightarrow\proj^1$ such that $(\X,\A)$;
		\item a $\C^*$-equivariant biholomorphism $\psi:\mathcal{X}\backslash\pi^{-1}(0)\cong X\times(\proj^1\backslash\{0\})$ for the action on $X\times\proj^1$ that is trivial on the $X$ component, and is the usual $\C^*$-action on $\proj^1$, and satisfies $\text{pr}_2\circ\psi=\pi$;
		\item writing  $X_t:=\pi^{-1}(t)$ and $\A_t:=\iota_{X_t}^*\A$, we require that $\A_t=\alpha$ and $\mathcal{A}_0$ is a Kähler class (in particular $(\X,\A)$ is relatively Kähler with respect to $\pi$).
	\end{enumerate}
\end{definition}
	
\begin{remark}
	Note that, because $S^1$ is compact and because the $S^1$-action preserves $\mathcal{A}$, tone can always choose a metric $\W\in\A$ to be $S^1$-invariant for the underlying action induced by $S^1\hookrightarrow\C^*$, by averaging over the Haar measure. We assume this throughout.
\end{remark}
	
\begin{remark}\label{P1_action}
	Recall the following facts about the standard $S^1$-action on $\proj^1=\C\cup\{\infty\}$ with the Fubini-Study metric $\w_{FS}$, written (in polar coordinates on $\C$) $\w_{FS}=\frac{4r}{(1+r^2)^2}dr\wedge d\theta$. 
	Then the standard $S^1$-action
	is the one underlying the multiplicative $\C^*$-action, and its fundamental vector field is $\del_{\theta}$. 
	A normalised moment map for this action is is $h_{FS}=\frac{(r^2-1)}{(r^2+1)}$, its weight is $1$ on $T_0\proj^1$, and $-1$ on $T_{\infty}\proj^1$.
	Note that, with our conventions, $\Ric\w_{FS}=\w_{FS}$. However, in what follows we will continue writing $\Ric\w_{FS}$ to emphasise that it represents $c_1(\proj^1)$.
\end{remark}
	
In this context, we define the Donalsdon-Futaki invariant of a test configuration $\testconf$ for a Kähler manifold $(M,\w)$ \cite{Legendre2020}:
\begin{equation}
	\text{DF}\testconf:=\frac{n\mu}{n+1}\A^{n+1}-\left(c_1(\X)-\pi^*c_1(\proj^1)\right)\cup\A^n
\end{equation}
where $\mu:=\frac{c_1(X)\cup\alpha^{n-1}}{\alpha^n}$.

\begin{remark}
	This definition for a Kähler manifold is found in \cite{Dervan2016,zak}, and is not the original one by Donaldson \cite{donaldson} in terms of coefficients of weight polynomials. However, they coincide on compactifications of polarised test configurations (i.e. in the projective setting), as proven in \cite{Odaka2013,Wang2012}.
\end{remark}
	
\begin{definition}[K-polystability]
	We say that $(X,\alpha)$ is \textit{K-polystable} if $\text{DF}\testconf\geq0$ for all test configurations, with equality exactly when $(X_0,\mathcal{A}_0)\cong(X,\alpha)$.
\end{definition}
	
The Yau-Tian-Donaldson conjecture \cite{donaldson,tian-inventiones,yau_openproblems} relates such a notion of stability to the existence of constant scalar curvature Kähler (cscK) metrics on Kähler manifolds:
	
\begin{conjecture}[Yau-Tian-Donaldson, \cite{donaldson,tian-inventiones,yau_openproblems}]
	A Kähler manifold $(X,\alpha)$ admits a cscK metric in $\alpha$ if and only if $(X,\alpha)$ it is $K$-polystable.
\end{conjecture}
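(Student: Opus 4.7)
The plan is to treat the two directions of the biconditional separately, since only the ``easy'' direction (cscK $\Rightarrow$ $K$-polystable) is known in full generality, while the converse is open outside of special cases such as the Fano/Kähler--Einstein setting of Chen--Donaldson--Sun. I would therefore give a full outline of the easy direction, making use of the equivariant localisation framework developed in this paper, and then sketch the programme for the hard direction, flagging the steps that are either conjectural or presently available only in restricted settings.

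For the implication cscK $\Rightarrow$ $K$-polystable, the key object is the Mabuchi functional $\mathcal{M}$ on the space of Kähler potentials in $\alpha$, whose first variation is $-(S(\w)-\hat{S})$, so cscK metrics are exactly its critical points. Along the weak geodesic ray attached to a test configuration $\testconf$, I would invoke the slope formula of Phong--Ross--Sturm (extended by Berman--Darvas--Lu) identifying the asymptotic slope of $\mathcal{M}$ with a non-Archimedean Mabuchi energy whose leading term is $\DF\testconf$ up to normalisation. Convexity of $\mathcal{M}$ along geodesics (Berman--Berndtsson) forces a cscK metric to be a global minimiser, hence the asymptotic slope is non-negative, giving $\DF\testconf\geq 0$. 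In the equality case one must reconstruct an isomorphism $(\X_0,\A_0)\cong(X,\alpha)$, and here Theorem \ref{main_theorem} enters naturally: it localises the vanishing $\DF\testconf=0$ to a Futaki-invariant statement on the central fibre, which constrains the degeneration to be a product.

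For the converse, $K$-polystable $\Rightarrow$ cscK exists, I would set up a Chen--Cheng continuity method: deform the cscK equation through a family of twisted equations parametrised by $t\in[0,1]$, establish openness via the implicit function theorem, and reduce closedness to uniform a priori estimates. The Chen--Cheng $C^0$--$C^2$--$C^4$ estimates reduce everything to a bound on the Mabuchi energy along the continuity path, equivalently to \emph{properness} of $\mathcal{M}$ in the sense of Darvas. One then aims to show that failure of properness produces, by a Donaldson--Sun type limiting procedure, a destabilising test configuration (or a more general non-Archimedean object such as a filtration) with non-positive Donaldson--Futaki invariant, contradicting $K$-polystability.

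The main obstacle is precisely this final step. In the Fano case the partial $C^0$ estimate of Donaldson--Sun combined with Cheeger--Colding--Tian theory of Gromov--Hausdorff limits of Kähler--Einstein metrics yields a genuine algebraic degeneration; outside this setting no such compactness theory is available, and the class of test configurations may need to be enlarged (uniform $K$-stability, filtrations, or the non-Archimedean language of Boucksom--Jonsson) for the correspondence to hold as stated. The equivariant-localisation viewpoint of this paper suggests one possible angle of attack: express the relevant Mabuchi-slope invariants of near-degenerating families as equivariant integrals on compactified degenerations, and exploit the rigidity coming from the fixed-locus contributions in Theorem \ref{localisation} to either construct the missing destabiliser or rule out its existence. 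Fully realising such a programme is well beyond a short plan, and the bulk of the effort would lie in developing the appropriate singular/equivariant non-Archimedean framework on which the easy-direction argument already sketched could be run in reverse.
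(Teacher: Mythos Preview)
The statement you were asked to prove is explicitly labelled a \emph{Conjecture} in the paper, and the paper offers no proof of it whatsoever; it is recorded purely as background. Indeed, the very next Remark in the paper stresses that the formulation given (with only smooth test configurations) is not even expected to be the correct one, and that singular test configurations must be allowed. So there is no ``paper's own proof'' to compare your attempt against.

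To your credit, you recognise this: you correctly state that the direction $K$-polystable $\Rightarrow$ cscK is open in general, and what you present for that direction is a research programme rather than an argument. That is the honest position, but it means your write-up is a survey of the state of the art, not a proof.

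On the easy direction, your outline via Mabuchi energy, geodesic convexity (Berman--Berndtsson), and the slope formula is broadly the known route to cscK $\Rightarrow$ $K$-semistable, with the upgrade to $K$-polystable due to Berman--Darvas--Lu. However, your appeal to Theorem~\ref{main_theorem} of the present paper for the equality case is not sound: that theorem requires the central fibre $(\X_0,\A_0)$ to be \emph{smooth}, which is a very restrictive hypothesis that a generic test configuration will not satisfy. The actual treatment of the equality case in the literature does not go through a Futaki-invariant identification on a smooth central fibre; it uses properness/geodesic-stability arguments that work without any smoothness assumption on $\X_0$. So even in the direction that is ``known'', the localisation machinery of this paper does not supply the missing step you attribute to it.
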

	
\begin{remark}
	As was mentioned at the beginning of this section, this formulation of the Yau-Tian-Donaldson conjecture is not exact. In fact, to characterise the existence of cscK metrics, one must consider singular test configurations. 
\end{remark}
	
Before this was conjectured, Futaki \cite{futaki} had introduced what is now known as the $\textit{Futaki invariant}$, which gives an obstruction to the existence of cscK metrics. On any Kähler manifold $(X,\w)$, we write $\mathfrak{h}:=\Lie(\Aut(X))$ (whose elements we often confuse with their fundamental vector field), $S(\w)$ for the scalar curvature, $\hat{S}$ for its average, and we define the Futaki invariant as the map:
\begin{align}\label{futaki}
	\mathcal{F}:\hspace{1mm}\mathfrak{h}&\rightarrow\C\\
	V&\mapsto\int_Mf_V\left(S(\w)-\hat{S}\right)\w^n. \notag
\end{align}
Here $f_V$ is the function, unique up to a constant, appearing in the Hodge decomposition of the holomorphic vector field $-J\tilde{V}$, where any vector field $Y$ decomposes as $$g(Y,\cdot)=\xi_H+df_Y+d^cs_Y$$ with $\xi_H$ harmonic \cite[§2.1]{gauduchon}. One can show that the Futaki invariant is independent of the choice of metric in the Kähler class \cite[Theorem 4.12]{gabor}, so its non-vanishing guarantees that the corresponding class contains no cscK metrics.

\begin{remark}\label{fh}
	Note that, in the case where $\tilde{V}$ is hamiltonian, then $f_V$ is a hamiltonian for $\tilde{V}$. In fact, $g(-J\tilde{V},\cdot)=-\w(\tilde{V},\cdot)=dh_{\tilde{V}}$ where $h_{\tilde{V}}$ is a hamiltonian, so $h_{\tilde{V}}$ and $f_{\tilde{V}}$ are equal up to a constant.
\end{remark}
	
\begin{remark}
	A different way to view the Futaki invariant is as the constant derivative along automorphic flows of the Mabuchi functional. More details on this point of view can be found in \cite[Section 4.3]{gabor}.
\end{remark}
	
Using Remarks $\ref{form1}$ and $\ref{form2}$, and taking $V\in\mathfrak{h}$ to be the generator of an $S^1$-action, one can show that $\mathcal{F}(V)$ can we written as a product of equivariant classes as $$\mathcal{F}(V)=-\frac{1}{n}[(\w-h)^n]\cup[(\Ricw+\lap h)],$$ meaning localisation can be applied.

\subsection{$Z$-stability and $Z$-critical Kähler metrics}
	
We recall next the notions of $Z$-stability and $Z$-critical metrics introduced by Dervan \cite{Dervan2021} in the context of polarised varieties, translating to a differential-geometric language to fit our approach, working with a Kähler manifold $(X,\alpha)$.

We recall from the introduction that the notions of $Z$-stability are inspired by certain Bridgeland stability conditions on a triangulated category. More precisely, it is modelled on those conditions originating from a \textit{central charge}, which we view as a choice of topological information on $(X,\alpha)$. To each central charge is associated a notion of $Z$-stability, along with corresponding geometric partial differential equation.
Then Dervan conjectures an analogue of the Yau--Tian--Donaldson conjecture for each central charge \cite[Conjecture 1.1]{Dervan2021}, namely that $Z$-(poly)stability should be equivalent to existence of $Z$-critical metrics. We follow the notations used in \cite{Dervan2021}, and also remain in the case of central charges only involving $c_1(X)=-K_X$, and not higher Chern classes, as was done there.

A central charge requires a choice of some topological information, which can be encoded as:
\begin{itemize}\label{topological_information}
	\item a \textit{stability vector} $\rho=(\rho_0,\dots,\rho_n)\in\C^{n+1}$, normalised such that $\Ima(\rho_n)=i$;
	\item an $\textit{auxiliary cohomology class}$, i.e. a class $\Theta\in\oplus_{k=0}^nH^{k,k}(X,\C)$ whose $(0,0)$-component is 1;
	\item a $\textit{polynomial Chern form}$, which is a polynomial in $c_1(X)$, $f(c_1(X))=\sum_ {k=0}^n a_kc_1(M)^k$ with normalisation $a_0=a_1=1$.
\end{itemize}
\noindent Given these parameters, one defines a central charge $Z(X,\w)$ as follows \cite[Definition 2.4]{Dervan2021}:
\begin{definition}[Central charge]\label{central_charge}
	$Z(X,\alpha):=\sum_{l=0}^n\rho_l(\alpha^l\cup f(c_1(X))\cup\Theta)$
\end{definition}
\noindent We will write $\varphi(X,\w):=\arg(Z(X,\w))$, and assume throughout that $Z(X,\alpha)\neq0$ so this is well defined. Given a central charge, one can associate both a notion of algebro-geometric stability, i.e. $Z$-stability, and a geometric partial differential equation to which the solutions are known as $Z$-critical Kähler metrics. 
	
In what follows, it will be useful to write the central charge is a linear combination of the $(\alpha^l\cup c_1(X)^k\cup\Theta)$ as 
\begin{equation}\label{lincomb_centralcharge}
	Z(X,\alpha)=\sum_{k,l=0}^n a_{k,l}(\alpha^l\cup c_1(X)^k\cup\Theta)
\end{equation}
with coefficients $a_{k,l}\in\C$ chosen so that this expression coincides with the Definition \ref{testconf_invariant}.
	
\begin{example}
	An interesting choice of central charge given in \cite[Example 2.25]{Dervan2021} is $$Z(X,\alpha)=-e^{-i\alpha}e^{-ic_1(X)}=-\sum_{j=0}^n(\alpha^j\cup c_1(X)^{n-j}).$$ Then the corresponding partial differential equation is $$\Ima\left(e^{-i\varphi}\tilde{Z}(X,\w)\right)=0,$$ where $$\tilde{Z}(X,\w)=-\sum_{j=0}^n\frac{(-i)^j}{j!(n-j)!}\left(\frac{\w^{n-j}\wedge\Ric\w^j}{\w^n}-\frac{j}{n-j+1}\lap\frac{\w^{n-j+1}\wedge\w^{k-1}}{\w^n}\right).$$ This can be viewed as a manifold analogue of the deformed hermitian Yang-Mills equation on a holomorphic line bundle.
\end{example}

\subsubsection{$Z$-stability} \label{Z-stability}
	
Analogously to K-stability, $Z$-stability of a Kähler manifold $(X,\alpha)$ is defined using test configurations. Given a central charge $Z(X,\alpha)$ on $(X,\alpha)$, we would like to extend it to $\testconf$. The only difficult step is to extend $\Theta$ to a auxiliary class $\Theta_{\X}$ on $\testconf$. Here we give the explicit construction. 

\begin{lemma}\label{extend_theta}
	Given an auxiliary cohomology class $\Theta$ on a Kähler manifold $(X,\alpha)$, and a test configuration $\testconf$ for $(X,\alpha)$, $\Theta$ defines an $S^1$-equivariant cohomology class $\Theta_{\X\backslash\X_0}$ on $\X\backslash\X_0$.
\end{lemma}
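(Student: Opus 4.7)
The plan is to exploit the product structure on $\X\setminus\X_0$ provided by the biholomorphism $\psi$ from Definition \ref{test_configuration}(iii), combined with the fact that the $\C^*$-action on $X\times\proj^1$ is trivial on the $X$ factor. This triviality will be responsible for killing the contraction term in the Cartan differential, so that $\Theta$ extends in an essentially canonical way.

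First, I would pick any closed differential form $\theta$ on $X$ representing $\Theta$ (extending linearly over the bigraded summands of $\Theta$), and set
$$\theta_{\X}:=\psi^*\pi_X^*\theta\in\Omega^*(\X\setminus\X_0),$$
where $\pi_X:X\times(\proj^1\setminus\{0\})\to X$ is the projection onto the first factor. I would view $\theta_{\X}$ as the constant equivariant form $V\in\Lie(S^1)\mapsto\theta_{\X}$ and propose this as a representative of the desired class $\Theta_{\X\setminus\X_0}$.

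Next, I would check that $\theta_{\X}$ is $S^1$-invariant and equivariantly closed. Invariance is immediate, since $\psi$ intertwines the $\C^*$-action on $\X$ with an action on $X\times\proj^1$ which is trivial on $X$, so $\pi_X^*\theta$ is $S^1$-invariant under that action. For equivariant closedness, let $\tilde V$ denote the fundamental vector field of the $S^1$-action on $\X\setminus\X_0$. Since $\psi$ is $\C^*$-equivariant, $\psi_*\tilde V$ is tangent to the $\proj^1\setminus\{0\}$ factor of $X\times(\proj^1\setminus\{0\})$, and hence $(\pi_X)_*\psi_*\tilde V=0$. Consequently
$$\iota_{\tilde V}\theta_{\X}=\psi^*\iota_{\psi_*\tilde V}\pi_X^*\theta=\psi^*\pi_X^*\iota_{(\pi_X)_*\psi_*\tilde V}\theta=0,$$
which combined with $d\theta_{\X}=0$ gives $d_{\text{eq}}\theta_{\X}=0$.

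Finally, I would verify independence of the choice of representative: if $\theta'=\theta+d\eta$ for some form $\eta$ on $X$, then $\theta'_{\X}-\theta_{\X}=d(\psi^*\pi_X^*\eta)$, and by the same contraction argument $\iota_{\tilde V}(\psi^*\pi_X^*\eta)=0$, so $\theta'_{\X}-\theta_{\X}=d_{\text{eq}}(\psi^*\pi_X^*\eta)$ is equivariantly exact. This yields a well-defined $S^1$-equivariant cohomology class $\Theta_{\X\setminus\X_0}$ on $\X\setminus\X_0$. There is no real obstacle in this argument; the only point where anything happens is the vanishing of the contraction $\iota_{\tilde V}\pi_X^*\theta$, which is forced by the equivariance of $\psi$ together with triviality of the action on the $X$ factor.
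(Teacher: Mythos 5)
Your proposal is correct and follows essentially the same route as the paper's proof: pull back a representative $\theta$ via $\psi^*\mathrm{pr}_1^*$, observe that equivariance of $\psi$ and triviality of the action on the $X$ factor force $\iota_V\psi^*\mathrm{pr}_1^*\theta=0$, and use the same contraction argument to show independence of the representative. The only (harmless) addition is your explicit check of $S^1$-invariance, which the paper leaves implicit.
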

\begin{proof}
	The proof relies heavily on the $\C^*$-equivariant biholomorphism $\psi:\X\backslash\pi^{-1}(0)\rightarrow X\times(\proj^1\backslash\{0\})$ appearing in the definition of a test configuration, where the action on $X\times(\proj^1\backslash\{0\})$ is trivial on the $X$ component, and is the usual $\C^*$-action on $\proj^1\backslash\{0\}$. 
		
	For any choice of representative $\theta\in\Theta$, $\psi^*\text{pr}_1^*\theta$ is a closed form on $\X\backslash\X_0$, whose $(0,0)$ term is $1$. From this construction it also becomes apparent that $\iota_V\psi^*\text{pr}_1^*\theta=0$ for any $\theta\in\Theta$, where $V$ is the fundamental vector field of the $S^1$-action. Indeed, because $\psi$ is equivariant, and because of the action chosen on $X\times(\proj^1\backslash\{0\})$, the vector field $\psi_*V$ is parallel to $\proj^1$. Since $\text{pr}_1^*\theta$ vanishes along components parallel to $\proj^1$,
	$\iota_V(\psi^*\text{pr}_1^*\theta)=\iota_{\psi_*V}(\text{pr}_1^*\theta)=0$. This means that any $\theta\in\Theta$ defines an $S^1$-equivariant class $[\psi^*\text{pr}_1^*\theta]_{\text{eq}}$ on $\X\backslash\X_0$, which we denote $\Theta_{\X\backslash\X_0}$.
		
	It remains to show that the equivariant class $\Theta_{\X\backslash\X_0}$ constructed does not depend on the choice of $\theta\in\Theta$. In fact, if $\theta'=\theta+d\lambda$, we have that 
	\begin{align}
		\psi^*\text{pr}_1^*\theta'&=\psi^*\text{pr}_1^*\theta+\psi^*\text{pr}_1^*d\lambda \notag\\
		&=\psi^*\text{pr}_1^*\theta+d\psi^*\text{pr}_1^*\lambda \label{hi}. \notag
	\end{align}
	\noindent But $\iota_V\psi^*\text{pr}_1^*\lambda=0$, from the same argument used at the beginning of the proof to show that $\iota_V\psi^*\text{pr}_1^*\theta=0$ for any $\theta\in\Theta$. Then $d\psi^*\text{pr}_1^*\lambda=d_{\text{eq}}
	\psi^*\text{pr}_1^*\lambda$, and hence $$\psi^*\text{pr}_1^*\theta'=\psi^*\text{pr}_1^*\theta+d_{\text{eq}}\psi^*\text{pr}_1^*\lambda\hspace{2mm}\in \Theta_{\X\backslash\X_0}.$$ 
\end{proof}
	
In what follows, we will be requiring the following (stronger) corollary:
	
\begin{corollary}\label{corollary_extend_theta}
	Each term $\Theta_p\in H^{p,p}(X,\C)$ the decomposition $$\Theta=\oplus_{p=0}^n\Theta_p$$ defines an equivariant class $(\Theta_{\X\backslash\X_0})_p$ on $\X\backslash\X_0$.
\end{corollary}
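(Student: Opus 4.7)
The plan is to apply Lemma \ref{extend_theta} componentwise to the Hodge decomposition of $\Theta$. The key observation is that the argument in Lemma \ref{extend_theta} only requires a closed representative of $\Theta$; no use is made of the fact that the $(0,0)$-component of $\Theta$ is $1$, nor is there any interaction between the different graded pieces. So one should be able to run exactly the same argument for each summand $\Theta_p$ independently.

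Concretely, I would first fix a representative $\theta_p$ of $\Theta_p$ of pure degree $2p$, which exists because $\Theta_p\in H^{p,p}(X,\C)$. Then I would form $\psi^*\text{pr}_1^*\theta_p$ on $\X\backslash\X_0$ and repeat the two verifications of the lemma: (i) $\psi^*\text{pr}_1^*\theta_p$ is closed, since $d$ commutes with pullback and $\theta_p$ is closed; (ii) $\iota_V\psi^*\text{pr}_1^*\theta_p = \iota_{\psi_*V}\text{pr}_1^*\theta_p = 0$, by the very same reasoning as in Lemma \ref{extend_theta}, namely that $\psi_*V$ is tangent to the $\proj^1$ factor while $\text{pr}_1^*\theta_p$ has no component in that direction. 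Together these give $d_{\text{eq}}\psi^*\text{pr}_1^*\theta_p = 0$, so we obtain a well-defined equivariant class $(\Theta_{\X\backslash\X_0})_p := [\psi^*\text{pr}_1^*\theta_p]_{\text{eq}}$.

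For independence of the choice of representative, suppose $\theta_p' = \theta_p + d\lambda_p$ with $\lambda_p$ of pure degree $2p-1$ (again using Hodge theory to select a pure-degree primitive). Then, by exactly the argument at the end of the proof of Lemma \ref{extend_theta}, one has $\iota_V\psi^*\text{pr}_1^*\lambda_p = 0$, so $d\psi^*\text{pr}_1^*\lambda_p = d_{\text{eq}}\psi^*\text{pr}_1^*\lambda_p$, and hence $\psi^*\text{pr}_1^*\theta_p'$ represents the same equivariant class as $\psi^*\text{pr}_1^*\theta_p$.

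There is not really a main obstacle here, because the corollary follows by simply running the proof of Lemma \ref{extend_theta} separately on each Hodge component; the sole subtlety is ensuring that one can pick representatives of pure degree for $\Theta_p$ and its primitives, which is immediate from Hodge theory on $X$. Equivalently, one can phrase the proof more abstractly: the construction $\Theta\mapsto \Theta_{\X\backslash\X_0}$ of Lemma \ref{extend_theta} is linear and preserves degree (since $\psi^*\text{pr}_1^*$ does), so it restricts to each graded piece of $\oplus_{p=0}^n H^{p,p}(X,\C)$ and defines the classes $(\Theta_{\X\backslash\X_0})_p$.
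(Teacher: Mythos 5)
Your proof is correct and follows essentially the same route as the paper: the paper pulls back a full representative $\theta$ and then observes that, since $d$ and $\iota_V$ shift degree, the identities $d(\psi^*\mathrm{pr}_1^*\theta)=0$ and $\iota_V(\psi^*\mathrm{pr}_1^*\theta)=0$ hold degree by degree, whereas you decompose $\Theta$ on $X$ first and rerun the lemma on each piece — but since $\psi^*\mathrm{pr}_1^*$ preserves degree these are the same argument. Your treatment of well-definedness via a pure-degree primitive $\lambda_p$ is also exactly what the paper's appeal to "the argument in the proof of the Lemma" amounts to.
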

\begin{proof}
	By the previous lemma, it follows that each $\theta_{\X\backslash\X_0}\in\Theta_{\X\backslash\X_0}$ constructed in the proof satisfies $\iota_V\theta_{\X\backslash\X_0}=0$. 
	This combined with the fact that it is closed is equivalent to saying that each term $(\theta_{\X\backslash\X_0})_p$ is equivariantly closed, hence we can consider their equivariant classes $(\Theta_{\X\backslash\X_0})_p$.
	It follows directly from the argument in the proof of the Lemma that this does not depend on the choice of $(\theta_{\X\backslash\X_0})_p$.
\end{proof}

Because we require an equivariant class on all of $\X$, we will have to restrict ourselves to auxiliary classes $\Theta$ on $(X,\alpha)$ such that each of the induced equivariant classes $(\Theta_{\X\backslash\X_0})_p$ contains some $(\theta_{\X\backslash\X_0})_p$ that extends smoothly to $\X$. Then we denote the equivariant class of the extension $(\theta_\X)_p$ of $(\theta_{\X\backslash\X_0})$ by $(\Theta_\X)_p$. This is well defined, because an equivariantly exact form on $\X\backslash\X_0$ that extends smoothly to $\X$ extends to an equivariantly exact form on $\X$.

Given a central charge, each test configuration carries the following invariant \cite[Definition 2.8]{Dervan2021}:
\begin{equation}\label{testconf_invariant}
	Z\testconf:=\sum_{l=0}^n\frac{\rho_l}{l+1}\left(\mathcal{A}^{l+1}\cup f(c_1(\mathcal{X})-[\RicFS])\cup\Theta_{\X}\right) \in\C. \notag
\end{equation}
	
\noindent Thus, for a given test configuration $\testconf$, any central charge can be written as a linear combination of the terms $$\DFklt:=\left(\mathcal{A}^{l+1}\cup (c_1(\mathcal{X})-[\RicFS])^k\cup\Theta_\X\right).$$ Explicitly, in terms of the coefficients $a_{k,l}$ of the central charge \eqref{lincomb_centralcharge}:
\begin{equation}\label{lincomb2}
	Z(\X,\mathcal{A})=\sum_{k,l=0}^n \frac{a_{k,l}}{l+1}\hspace{0.2mm} \DFklt
\end{equation}

\begin{remark}\label{resolution_of_indeterminacy}
	Our restriction to auxiliary classes $\Theta$ such that $\Theta_{\X\backslash\X_0}$ as defined by Lemma \ref{extend_theta} extends smoothly to an equivariant class $\Theta_\X$ on $\X$ could be avoided by passing through an equivariant resolution of indeterminacy of the form  
	\[
	\begin{tikzcd}
		\mathcal{Y} \arrow[swap]{d}{q} \arrow{dr}{r} &  \\
		X\times\proj^1 \arrow[dotted]{r}{} & \X.
	\end{tikzcd}
	\]
	This process is used and described by Dervan in \cite[Section 2.1]{Dervan2021}, and \cite[Lemma 2.3]{Dervan2021} shows that the result is independent of the choice of resolution. However, through this construction, the central fibre $\mathcal{Y}_0$ will \textit{a priori} be singular, which would not allow us to produce an analogue of our main Theorem \ref{main_theorem}.
\end{remark}

The following plays the role of a $Z$-critical analogue of the Donaldson-Futaki invariant:
\begin{equation}\label{Zstable_quantity}
	\Ima\left(e^{-i\varphi}Z(\X,\A)\right).
\end{equation}
Then, similarly as for K-polystability, we define $Z$-polystability in the following way:
	
\begin{definition}[$Z$-polystability]\label{Z-polystability}
	We say that a Kähler manifold $(X,\alpha)$ is \textit{$Z$-polystable} if, for any test configuration $\testconf$ for $(X,\alpha)$, $\Ima\left(e^{-i\varphi}Z(\X,\A)\right) \geq0$, with equality exactly when $(\X_0,\mathcal{A}_0)\cong(X,\alpha)$.
\end{definition}
	
\begin{remark}
	Note that the original definition given by Dervan \cite{Dervan2021} considers $$\Ima\left(\frac{Z(\X,\A)}{Z(X,\alpha)}\right)$$ instead of $$\Ima\left(e^{-i\varphi}Z(\X,\A)\right).$$ These, of course, give rise to equivalent characterisations of $Z$-stability, since $$e^{-i\varphi}Z(X,\alpha)\in\mathbb{R}$$ is strictly positive by our assumption that $Z(X,\alpha)\neq0$.
\end{remark}
	
A useful equality that follows from the definitions is sometimes called \textit{translation invariance}, and tells us that the quantity $$\Ima\left(e^{-i\varphi}Z(\X,\A)\right)$$ remains unchanged if we change $\mathcal{A}$ by addition of pullbacks of the Fubini-Study metric on $\proj^1$. Explicitely, for any $m$, \cite[Lemma 2.13]{Dervan2021} proves that
\begin{equation}\label{translation_invariance}
	\Ima\left(e^{-i\varphi}Z(\X,\A+ c_1(\mathcal{O}(m))\right)=\Ima\left(e^{-i\varphi}Z(\X,\A)\right).
\end{equation}
	
\begin{remark}
	In algebro-geometric language, if $\mathcal{A}=c_1(\mathcal{L})$, where $\mathcal{L}$ is a relatively ample line bundle, then \eqref{Zstable_quantity} is unchanged if one changes the polarisation of $\X$ by the operation $\mathcal{L}\rightarrow\mathcal{L}+\pi^*\mathcal{O}(k)$ for any $k$.
\end{remark}

\begin{remark}
	An axiomatic approach to $Z$-stability, more in line with Bridgeland stability, is developed by Dervan in \cite{Dervan2022}.
\end{remark}
	
\subsubsection{$Z$-critical metrics} \label{Z-metrics}
	
We start by recalling that the \textit{Ricci curvature} of a Kähler metric $\w$ is defined as $$\Ric\w:=-i\del\delbar\log(\det(\w)).$$
Its cohomology class is the first Chern class of $X$, $c_1(X)=[\Ric\w]$, and is independent of $\w$ \cite[Page 14]{gabor}.
	
To a central charge, in addition to a notion of stability, Dervan also associates a geometric partial differential equation, which will be used to define the notion of $Z$-critical metric. Fixing some $\theta\in\Theta$ as before, one associates to each term $\alpha^l\cup c_1(X)^k\cup\Theta$ appearing in \eqref{central_charge} the function \cite[Equation 2.1]{Dervan2021}:
\begin{equation}
	\frac{\Ricw^k\wedge\w^l\wedge\theta}{\w^n}-\frac{k}{l+1}\lap\left(\frac{\Ricw^{k-1}\wedge\w^{l+1}\wedge\theta}{\w^n}\right). \notag
\end{equation}
Then, writing the central charge as a linear combination as in Equation \eqref{lincomb_centralcharge}, the function $\tilde{Z}(X,\w)$ is written as 
\begin{equation}\label{yourmom}
	\tilde{Z}(X,\w)=\sum_{k,l=0}^na_{k,l}\left(\frac{\Ric\w^k\wedge\w^l\wedge\theta}{\w^n}-\frac{k}{l+1}\lap\frac{\Ric\w^{k-1}\wedge\w^{l+1}\wedge\theta}{\w^n}\right).
\end{equation}
Clearly, this function is satisfies
\begin{equation}
	\int_X\tilde{Z}(X,\w)\w^n=Z(X,\w)\notag
\end{equation}

From this, we obtain the following definition, which is equivalent to requiring that the argument of $\tilde{Z}(X,\alpha)$ is constant:
	
\begin{definition}\label{Z-critical}
	A metric $\w$ is \textit{$Z$-critical} if it satisfies the equation
	\begin{equation}
		\Ima(e^{-i\varphi(X,\w)}\tilde{Z}(X,w))=0,
	\end{equation}
	and the positivity condition $\Rea(e^{-i\varphi(X,\w)}\tilde{Z}(X,w))>0$ holds.
\end{definition}
	
This is a partial differential equation on the space of Kähler metrics in the Kähler class $\alpha$, or equivalently on the space of Kähler potentials. 
	
Given a choice of $\theta\in\Theta$, we now introduce the invariants that appear explicitly in \cite[Corollary 3.12]{Dervan2021}, and play the role of $Z$-critical analogues of the Futaki invariant:
\begin{align}\label{Z-futaki}
	\mathcal{F}^Z:=\hspace{2mm}&\mathfrak{h}\rightarrow \C \\
	&V\mapsto \int_X f_V \Ima(e^{-i\varphi}\tilde{Z}(X,\alpha))\w^n, \notag
\end{align}
where $f_V$ is the same function appearing in \eqref{futaki}.
	
Recall from Remark \eqref{fh} that if $V\in\mathfrak{h}$ generates an $S^1$-action, and requiring the normalisation $\int_Xf_V\w^n=0=\int_Xh_V\w^n$, then $f_V=h_V$. In this case, \cite[Corollary 3.12]{Dervan2021} shows that the $\mathcal{F}^Z$ are independent of the choice of $\w$ in the Kähler class $\alpha$ (as the notation suggests). In this sense, these invariants represent obstructions to the existence of $Z$-critical metrics in $\alpha$.
It will be useful to express the $\mathcal{F}^Z$ as linear combinations over $k$ and $l$ using Equation \eqref{yourmom}, as
\begin{equation}\label{miam}
	\mathcal{F}^Z=\sum_{k,l=0}^na_{k,l}\mathcal{F}^Z_{k,l},
\end{equation}
	where 
\begin{align}\label{Zkl-futaki}
	\mathcal{F}^Z_{k,l}:=\hspace{2mm}&\mathfrak{h}\rightarrow \C \notag \\
	&V\mapsto \int_X f_V \Ima\left(e^{-i\varphi}\left(\frac{\Ric\w^k\wedge\w^l\wedge\theta}{\w^n}-\frac{k}{l+1}\lap\frac{\Ric\w^{k-1}\wedge\w^{l+1}\wedge\theta}{\w^n}\right)\right)\w^n. \notag
\end{align}

\section{Localisation}

\subsection{Localisation and $Z$-stability}\label{localizing_Z-stability}
	
Let us fix a central charge $Z(X,\alpha)$ on $(X,\alpha)$. Our main goal in this section will be to write the associated central charge of a test configuration $Z(\X,\A)$ as a product of equivariant classes, then to apply Theorem \ref{localisation}. The fact that this can be done implies that the only contributions to $Z(\X,\A)$ come from the fixed locus of the $S^1$-action on $\mathcal{X}$. 
	
In the framework of a test configuration used to define $Z$-stability, the $S^1$-action we are interested in is the one underlying the $\C^*$-action of $\X$.
	
Because $\pi:\mathcal{X}\rightarrow\proj^1$ is $\C^*$-equivariant, the fixed locus must be contained in $\X_0$ and $\X_{\infty}$, and the definition of a test configuration \ref{test_configuration} tells us that all of $\X_{\infty}$ is fixed. 
A crucial part of our main results in this subsection will be to prove that the contributions from the fibre at infinity are trivial when it comes to determining $Z$-stability. This is not immediately apparent in our definition of $Z(\X,\A)$. However, it can be made explicit by considering a certain normalisation $$Z(\X,\A)\rightarrow Z(\X,\A)+\lambda\A^{n+1}$$ for some $\lambda\in\C$, for which the contribution from the fibre at infinity will exactly vanish. Clearly, this has to be done in a way that preserves $$\Ima\left(e^{-i\varphi}Z(\X,\A)\right),$$ as we would like the inequality defining $Z$-polystability \ref{Z-polystability} to be unchanged.
We also require that, after this modification, the translation invariance from Equation \eqref{translation_invariance} still holds. This normalisation is made explicit in the following lemma.
	
\begin{lemma}\label{renormalisation}
	Given a central charge on a Kähler manifold $(X,\alpha)$, the quantity 
	\begin{equation}
		\mathcal{Z}(\X,\A):=Z\testconf -\frac{\mathcal{A}^{n+1}}{(n+1)\alpha^n}Z(X,\alpha) \notag
	\end{equation}
	is translation invariant, meaning it is invariant under the transformation $\A\rightarrow\A+c_1(\mathcal{O}(m))$ for any $m$. Furthermore, it satisfies 
	\begin{equation}\label{y}
		\Ima\left(e^{-i\varphi}\mathcal{Z}(\X,\A)\right) =\Ima\left(e^{-i\varphi}Z(\X,\A)\right)  \notag
	\end{equation}
\end{lemma}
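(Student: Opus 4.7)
The plan is to prove both claims by direct computation, exploiting the nilpotence of $\pi^*c_1(\mathcal{O}(1))$ on $\X$ and the fact that this class is Poincaré dual to a (smooth) fibre.

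\textbf{Translation invariance.} First I would write $\omega := \pi^*c_1(\mathcal{O}(1))$ and note the two key identities: (i) $\omega^2 = 0$ in $H^*(\X)$, since it is the pullback of a class on $\proj^1$ that already squares to zero; and (ii) for any class $\beta$ on $\X$, the integral $\beta \cup \omega$ equals the restriction $\iota_{X_t}^*\beta$ integrated over a (generic) fibre $X_t$. Then for $m\in\mathbb{Z}$, the binomial expansion plus $\omega^2=0$ gives
\begin{equation}
(\A + m\omega)^{l+1} = \A^{l+1} + (l+1)m\,\A^l\cup\omega. \notag
\end{equation}
Plugging into the definition of $Z(\X,\A)$, and using that on a smooth fibre $X_t$ one has $\A|_{X_t} = \alpha$, $(c_1(\X) - [\RicFS])|_{X_t} = c_1(X)$ (since the normal bundle to the fibre is trivial and $\pi$ is constant on $X_t$), and $\Theta_\X|_{X_t} = \Theta$ (by the very construction in Lemma \ref{extend_theta}), I would obtain
\begin{equation}
Z(\X,\A + m\omega) = Z(\X,\A) + m\sum_{l=0}^n \rho_l\bigl(\alpha^l \cup f(c_1(X))\cup\Theta\bigr) = Z(\X,\A) + m\,Z(X,\alpha). \notag
\end{equation}

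Applied to the top self-intersection, the same expansion yields $(\A+m\omega)^{n+1} = \A^{n+1} + (n+1)m\,\alpha^n$, hence
\begin{equation}
\frac{(\A+m\omega)^{n+1}}{(n+1)\alpha^n}\,Z(X,\alpha) = \frac{\A^{n+1}}{(n+1)\alpha^n}\,Z(X,\alpha) + m\,Z(X,\alpha). \notag
\end{equation}
Subtracting the two displayed equations, the $m\,Z(X,\alpha)$ terms cancel, which is exactly $\mathcal{Z}(\X,\A+m\omega) = \mathcal{Z}(\X,\A)$.

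\textbf{Imaginary part.} Here the observation is that $\varphi = \arg Z(X,\alpha)$, so $e^{-i\varphi}Z(X,\alpha) = |Z(X,\alpha)| \in \mathbb{R}_{>0}$. Since the coefficient $\mathcal{A}^{n+1}/\bigl((n+1)\alpha^n\bigr)$ is real, the quantity $e^{-i\varphi}\cdot\frac{\A^{n+1}}{(n+1)\alpha^n}Z(X,\alpha)$ is real, and therefore
\begin{equation}
\Ima\bigl(e^{-i\varphi}\mathcal{Z}(\X,\A)\bigr) = \Ima\bigl(e^{-i\varphi}Z(\X,\A)\bigr), \notag
\end{equation}
as claimed.

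\textbf{Expected difficulty.} The only non-routine step is the restriction computation identifying $Z(\X,\A+m\omega) - Z(\X,\A)$ with $m\,Z(X,\alpha)$; in particular one must be careful that $\Theta_\X$ is constructed precisely so that $\Theta_\X|_{X_t} = \Theta$ for smooth fibres $t\neq 0$, which is guaranteed by the $\psi^*\mathrm{pr}_1^*$ formula in the proof of Lemma \ref{extend_theta}. Everything else is bookkeeping.
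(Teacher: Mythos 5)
Your proposal is correct and follows essentially the same route as the paper: both arguments rest on the identity $Z(\X,\A+c_1(\mathcal{O}(m)))=Z(\X,\A)+mZ(X,\alpha)$, the vanishing $c_1(\mathcal{O}(m))^2=0$, the fibre-integration computation $\A^n\cup c_1(\mathcal{O}(m))=m\alpha^n$, and the observation that $e^{-i\varphi}Z(X,\alpha)$ and $\A^{n+1}/((n+1)\alpha^n)$ are real. The only difference is that you prove the first identity inline via restriction to a generic fibre, whereas the paper simply cites it from \cite[Lemma 2.13]{Dervan2021}; your justification of that step (triviality of the normal bundle of a smooth fibre, vanishing of $\iota_{X_t}^*\pi^*\Ric\w_{FS}$, and $\Theta_\X|_{X_t}=\Theta$) is sound.
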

\begin{proof}
	The last equality \eqref{y} follows immediately from the fact that $\frac{\mathcal{A}^{n+1}}{(n+1)\alpha^n}$ is real. To prove the first equation \eqref{renormalisation}, we take $\A\rightarrow\A+c_1(\O(m))$ and compute. We use the following equality from \cite[Lemma 2.13]{Dervan2021}: $$Z(\X,\A+c_1(\O(m)))=Z(\X,\A)+mZ(X,\alpha)$$ and obtain
	\begin{align}
		\mathcal{Z}(\X,\A+c_1(\O(m)))&=Z(\X,\A+\O(m))-\frac{(\A+c_1(\O(m)))^{n+1}}{(n+1)\alpha^n}Z(X,\alpha), \notag\\
		&=Z(\X,\A)+mZ(X,\alpha)-\frac{\A^{n+1}}{(n+1)\alpha^n}Z(X,\alpha) 	-\frac{(n+1)(\A^n\cup c_1(\O(m)))}{(n+1)\alpha^n}Z(X,\alpha), \notag\\
		&=\mathcal{Z}(\X,\A)  +mZ(X,\alpha) -\frac{(\A^n\cup 	c_1(\O(m)))}{\alpha^n}Z(X,\alpha),   \notag
	\end{align}
	where we have used that $c_1(\O(m))^2=0$ since $\proj^1$ has complex dimension one.
		
	We then need to show that $$mZ(X,\alpha) -\frac{(\A^n\cup c_1(\O(m)))}{\alpha^n}Z(X,\alpha)$$ vanishes. Choosing any $m\pi^*\w_{FS}\in c_1(\O(m))$, we write
	\begin{align}
		\A^n\cup c_1(\O(m))=m\int_\X \pi^*\w_{FS}\wedge\W^n. \notag
	\end{align}
	Because $\int_\X\pi^*\w_{FS}\wedge\W^n=\int_{\X\backslash\X_0}\pi^*\w_{FS}\wedge\W^n$, we now integrate over $X\times(\proj^1\backslash\{0\})$. 
	Using the properties of fibre integrals, $\pi^*\w_{FS}$ factors through the map $\pi:\X\rightarrow\proj^1$, 
	and because of properties $(ii)$ and $(iii)$ of Definition \eqref{test_configuration} we obtain:
	$$\int_\X
	\pi^*\w_{FS}\wedge\W^n=
	\int_{\proj^1}\w_{FS}\int_X\iota_X^*\W^n=
	\int_{\proj^1}
	\w_{FS}
	\int_X\w^n=
	\int_X\w^n,$$ 
	from which we conclude.
\end{proof}

\begin{remark}
	Note that in the special case in which $\Theta=1$, the Chern polynomial is $f(c_1(X))=c_1(X)$, $\rho_j=0$ for all $j\neq n-1$ and $\rho_{n-1}=1$, we recover the Donaldson-Futaki invariant from $\mathcal{Z}(\X,\A)$:
	\begin{align}
		\mathcal{Z}(\X,\A)&=Z(\X,\A)-\frac{\A^{n+1}}{(n+1)\alpha^n}Z(X,\alpha) \notag\\
		&=-\frac{1}{n}\A^{n+1}\cup(c_1(\X)-\pi^*c_1(\proj^1))-\frac{\alpha^{n-1}\cup c_1(X)}{(n+1)\alpha^n}\A^{n+1} \notag\\
		&=\frac{1}{n}\DF(\X,\A). \notag
	\end{align}
\end{remark}

Recall from Equations \eqref{lincomb_centralcharge} and \eqref{lincomb2} that $Z(X,\alpha)$ and $Z(\X,\A)$ can be written as linear combinations in the following way:
\begin{align}
	&Z(X,\alpha)=\sum_{k,l=0}^na_{k,l}(\alpha^l\cup c_1(X)^k\cup\Theta), &\text{and}& &Z(\X,\A)=\sum_{k,l=0}^n\frac{a_{k,l}}{l+1}\DFklt. \notag
\end{align}
As a consequence, we can also write $\Zhat$ as a linear combination over $k$ and $l$. We introduce the numbers
\begin{equation}\label{muklt}
	\mu_{k,l}:=\frac{\alpha^l\cup c_1(X)^k\cup\Theta}{\alpha^n}\in\C,
\end{equation}
from which we define 
\begin{equation}
	\DFhat:=\DFklt-\frac{l+1}{n+1}\mu_{k,l}\A^{n+1}. \notag
\end{equation}
Then from the equality $$\sum_{k,l=0}^na_{k,l}\mu_{k,l}=\frac{Z(X,\alpha)}{\alpha^n}$$ we can deduce that
\begin{equation}
	\mathcal{Z}(\X,\A)= \sum_{k,l=0}^n\frac{a_{k,l}}{l+1}\DFhat.
\end{equation}
	
Now, given any central charge $Z(X,\alpha)$ on $(X,\alpha)$, we are ready to write the corresponding invariant $\Zhat$ as a product of equivariant classes. By linearity, it is enough to do so for the $\DFhat$. We define the following equivariant classes, where $h$ is a hamiltonian for our $S^1$-invariant $\W\in\A$:
\begin{align}
	\mathcal{A}_{\text{eq}}&:=[\W-h], &\text{and}& &\ceq:=[(\RicW-\pi^*\Ric\w_{FS})+(\lap h-\pi^*\lap\hFS)]. \notag
\end{align}
We also recall the equivariant classes $(\Theta_\X)_p$ on $\X$ for any $p\in\{0,\dots,n\}$ from Corollary \ref{corollary_extend_theta}.
	
Finally, from this, we can write:
\begin{equation}\label{DF_localisable}
	\DFhat=\left(\A_{\text{eq}}^{l+1}\cup\ceq^k\cup(\Theta_\X)_{n-k-l}-\frac{l+1}{n+1}\mu_{k,l}\Aeq^{n+1}\right). 
\end{equation}

\begin{remark}\label{eq_class_depends_only_on_kahler_class}
	As our notation suggests, the equivariant class $\mathcal{A}_{\text{eq}}$ does not depend on the choice of representative $\W\in\mathcal{A}$. To see this, recall that if $h$ is a hamiltonian for the $S^1$-action generated by a vector field $V$ with respect to a Kähler metric $\W$, then $h':=h+d^c\psi(V)$ is a moment map for that same action with Kähler metric $\W':=\W+dd^c\psi$. Then $$(\W'-h')-(\W-h)=d_{\text{eq}}(id^c\psi).$$ From this argument, it is clear that $\ceq$ is also independent of the choice of $\W\in\mathcal{A}$. This in turn immediately implies that $Z(\X,\A)$ (and consequently $\Zhat$) is independent of the choice of $\W\in\A$.
\end{remark}
	
\begin{remark}
	If we had not introduced the normalisation from Lemma \ref{renormalisation} and had instead localised $Z(\X,\A)$, by an identical argument we would have similarly obtained $$Z(\X,\A)=\A_{\text{eq}}^{l+1}\cup c_1(\X/\proj^1)_{\text{eq}}^k\cup(\Theta_\X)_{n-k-l}.$$
\end{remark}
	
Now we state the main result of this subsection, for which we fix a test configuration $(\X,\A)$ for $(X,\alpha)$, and write the fixed locus of its $S^1$-action as $$\Fix_{S^1}({\X})=X_{\infty}\sqcup\left(\bigsqcup_{\lambda\in\Lambda}Z_{\lambda}\right),$$ where $\Lambda$ runs over the connected components $Z_{\lambda}$ of $\Fix_{S^1}(\X)$ inside $\X_0$.
	
\begin{proposition}\label{Z-stability_localised}
	The following formula for $\DFhat$ holds:
	\begin{align}
		\DFhat
		&=(-2\pi)^{n+1}\sum_{\lambda\in\Lambda}
		\int_{Z_{\lambda}}
		\frac{\iota_{\lambda}^*((\W-h)^{l+1}\wedge(\Ric\W+(\lap h-\pi^*\lap\hFS))^k\wedge(\theta_\X)_{n-k-l})}{\euler(N_{\lambda}^\X)} \notag
		\\
		&\hspace{25mm}
		-\frac{l+1}{n+1}\muklt
		\int_{Z_{\lambda}}
		\frac{\iota_{\lambda}^*(\W-h)^{n+1}}{\euler(N_{\lambda}^\X)}.
		\notag
	\end{align}
\end{proposition}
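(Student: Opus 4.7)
The plan is to apply the Atiyah--Bott localisation formula (Theorem \ref{localisation}) to each of the two equivariantly closed factors in the decomposition \eqref{DF_localisable} of $\DFhat$, namely $\Aeq^{l+1}\cup\ceq^k\cup(\Theta_\X)_{n-k-l}$ and $\Aeq^{n+1}$. Since the fixed locus of the underlying $S^1$-action on $\X$ decomposes as $X_\infty\sqcup\bigsqcup_{\lambda\in\Lambda}Z_\lambda$, localisation produces one contribution from $X_\infty$ and one from each $Z_\lambda\subset\X_0$ in each factor. The main task is to show that the two $X_\infty$-contributions cancel exactly in the combination defining $\DFhat$; this is precisely the role of the renormalisation introduced in Lemma \ref{renormalisation}. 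Once this cancellation is established, the remaining sum over $Z_\lambda\subset\X_0$ follows directly from Theorem \ref{localisation}.

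To compute the $X_\infty$-contribution I first gather the equivariant data at $X_\infty$. The normal bundle $N_{X_\infty}^\X=\pi^*T_\infty\proj^1$ is topologically trivial on $X_\infty$ and carries the $S^1$-action of weight $-1$ (Remark \ref{P1_action}), so formula \eqref{euler_class} gives $\euler(N_{X_\infty}^\X)=1$. The hamiltonian $h$ restricts to a constant $h_\infty$ on $X_\infty$, and since $\pi$ is constant there we have $\pi^*\Ric\w_{FS}|_{X_\infty}=0$. Applying formula \eqref{laplacian_h} to the $S^1$-action on $\X$ at $X_\infty$ and to the standard action on $\proj^1$ at $\infty$, both of which have the single normal weight $-1$, yields $\lap h|_{X_\infty}=2=\pi^*\lap\hFS|_{X_\infty}$, so these 0-form contributions cancel in $\ceq$. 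Hence $\iota^*_{X_\infty}\ceq=\RicW|_{X_\infty}$, a Chern--Weil representative of $c_1(X)$ via the adjunction formula for the trivial normal bundle. Finally, the biholomorphism $\psi$ from Definition \ref{test_configuration} identifies $X_\infty$ with $X$ in such a way that $\iota^*_{X_\infty}(\Theta_\X)_{n-k-l}$ represents $\Theta_{n-k-l}$.

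With these identifications in hand, the $X_\infty$-contribution from $\Aeq^{l+1}\cup\ceq^k\cup(\Theta_\X)_{n-k-l}$ is $(-2\pi)^{n+1}\int_X(\w-h_\infty)^{l+1}\wedge\Ric\w^k\wedge\theta_{n-k-l}$. Expanding the first factor binomially, only the $\w^l$ term has the correct degree; recalling the definition \eqref{muklt} of $\muklt$, this equals $-(l+1)h_\infty(-2\pi)^{n+1}\muklt\,\alpha^n$. An identical calculation gives $-(n+1)h_\infty(-2\pi)^{n+1}\alpha^n$ for the $X_\infty$-contribution from $\Aeq^{n+1}$. Multiplying the latter by $\frac{l+1}{n+1}\muklt$ and subtracting, the two contributions cancel exactly, so $\DFhat$ receives no contribution from $X_\infty$. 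The remaining sum over $Z_\lambda\subset\X_0$, using that $\pi^*\Ric\w_{FS}|_{\X_0}=0$ also eliminates the corresponding term from $\ceq$ there, is then read off directly from Theorem \ref{localisation} and yields the claimed formula.

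The main obstacle is the equivariant bookkeeping at $X_\infty$: identifying the normal bundle with its weight, and verifying that the 0-form correction $\lap h-\pi^*\lap\hFS$ in $\ceq$ vanishes on $X_\infty$ through matching weight computations. Once these identifications are in place, the cancellation of the $X_\infty$-contribution is a short binomial computation, and what remains is the direct application of Theorem \ref{localisation} to the fixed components inside $\X_0$.
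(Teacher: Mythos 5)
Your proposal is correct and follows essentially the same route as the paper's own proof: localise the equivariant-class expression \eqref{DF_localisable}, show the $X_\infty$-contribution vanishes by computing $\euler(N_{\infty}^{\X})=1$, matching the weights so that $\lap h-\pi^*\lap h_{FS}$ vanishes on $X_\infty$, identifying $\iota_\infty^*c_1(\X)$ with $c_1(X_\infty)$ via the trivial normal bundle, and cancelling the remaining degree-$n$ terms against the $\muklt$-normalisation, before reading off the sum over the fixed components in $\X_0$. The only cosmetic difference is that you localise the two equivariant factors separately and phrase the cancellation directly in terms of $\muklt\,\alpha^n$ rather than as a difference of integrals, which is the same computation by linearity.
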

	
\begin{proof}
	Using Equation \eqref{DF_localisable} for $\DFhat$, we apply Theorem \ref{localisation}:
	\begin{align}\label{everything}
		\DFhat&=\int_{\X}\A_{\text{eq}}^{l+1}\cup\ceq^k\cup(\Theta_\X)_{n-k-l}-\frac{l+1}{n+1}\mu_{k,l}\Aeq^{n+1}, \notag\\
		&=(-2\pi)^{n+1}\sum_{\lambda\in\Lambda}
		\int_{Z_{\lambda}}\frac{\iota_{Z_\lambda}^*\left(\A_{\text{eq}}^{l+1}\cup\ceq^k\cup(\Theta_\X)_{n-k-l}-\frac{l+1}{n+1}\mu_{k,l}\Aeq^{n+1}\right)}
		{\euler(N_{\lambda}^{\mathcal{X}})}\\
		&\hspace{18mm}+(-2\pi)^{n+1}\int_{\X_{\infty}}\frac{\iota_{Z_\infty}^*\left(\A_{\text{eq}}^{l+1}\cup\ceq^k\cup(\Theta_\X)_{n-k-l}-\frac{l+1}{n+1}\mu_{k,l}\Aeq^{n+1}\right)}
		{\euler(N_{\infty}^{\mathcal{X}})}. \notag
	\end{align}
		
	We now check that the contribution from the fibre at infinity, vanishes, i.e. that	$$\int_{\X_{\infty}}\frac{\iota^*_{\infty}\left(\A_{\text{eq}}^{l+1}\cup\ceq^k\cup(\Theta_\X)_{n-k-l}-\frac{l+1}{n+1}\mu_{k,l}\Aeq^{n+1}\right)}{\euler(N_{\infty}^{\mathcal{X}})}=0. \notag$$ 
	First we start by understanding $\euler(N_{\infty}^\X)$. From the biholomorphism $\mathcal{X}\backslash\mathcal{X}_0\cong X\times(\proj^1\backslash\{0\})$ it follows that, for any $t\in\proj^1\backslash\{0\}$, $N_t^{\mathcal{X}}$ is trivial. The $\C^*$-equivariant splitting
	\begin{equation}\label{splitting}
		\iota_t^*T\mathcal{X}\cong T\X_t\oplus\pi^*T\proj^1, \notag
	\end{equation} 
	immediately gives us that $N_t^{\mathcal{X}}\cong\pi^*T_t\proj^1$. Because $\pi$ is $\C^*$-equivariant, and recalling Remark \ref{P1_action}, this means that the weight of the action on $N_{\infty}^\X$ is $-1$. Then,  through Equation \eqref{euler_class}, we conclude that $\euler(N_{\infty}^\X)=1$.
	Now it remains to show that
	\begin{equation}\label{ew}
		\int_{\X_{\infty}}\iota_{\infty}^*\left(\A_{\text{eq}}^{l+1}\cup\ceq^k\cup(\Theta_\X)_{n-k-l}-\frac{l+1}{n+1}\mu_{k,l,\Theta}\Aeq^{n+1}\right)=0. \notag
	\end{equation}
	We will use the subscript notation $\beta_{\infty}=\iota_{\infty}^*\beta$ for any form $\beta$. Using in addition the fact that $\iota_{\infty}^*\pi^*\Ric\w_{FS}=0$, we obtain:
	\begin{align}\label{eww} 
		&\hspace{35mm}\int_{\X_{\infty}}\iota_{\infty}^*\left(\A_{\text{eq}}^{l+1}\cup\ceq^k\cup(\Theta_\X)_{n-k-l}-\frac{l+1}{n+1}\mu_{k,l,\Theta}\Aeq^{n+1}\right)\notag \\
		&=
		\int_{\X_{\infty}}(\W_{\infty}-h_{\infty})^{l+1}\wedge((\Ric\W)_{\infty}+(\lap h-\pi^*\lap\hFS)_{\infty})^k\wedge(\theta_\X)_{n-k-l,\infty} -\frac{l+1}{n+1}\muklt\int_{\X_{\infty}}(\W_{\infty}-h_{\infty})^{n+1},
	\end{align}
	with $\theta_\X\in\Theta_\X$.
	We then use Equation \eqref{laplacian_h} to show that $$(\lap h-\pi^*\lap\hFS)_{\infty}=0.$$ Clearly, by Remark \ref{P1_action}, $(\pi^*\lap h_{FS})_{\infty}=2$. From the same equivariant splitting mentioned above, namely $$\iota_{\infty}^*T\X\cong T\X_{\infty}\oplus\pi^*T_{\infty}\proj^1,$$ and from the fact that $\pi$ is $\C^*$-equivariant, we know that the weight of the $S^1$-action on $N_{\infty}^\X\cong\pi^*T\proj^1$ is the weight of the $S^1$-action on $\proj^1$ at infinity, i.e. $1$. So $\lap h=-2$ as well, hence $(\lap h-\pi^*\lap\hFS)_{\infty}$ vanishes as claimed.
		
	The only remaining terms from Equation \eqref{eww} are
	\begin{align}\label{i_love_mu}
		\int_{\X_{\infty}}(\W_{\infty}-h_{\infty})^{l+1}\wedge\Ric\W^k\wedge(\theta_\X)_{n-k-l,\infty} -\frac{l+1}{n+1}\muklt\int_{\X_{\infty}}(\W_{\infty}-h_{\infty})^{n+1}.
	\end{align}
	Keeping only the $n^{th}$ degree terms and using that $h_{\infty}$ is constant on $\X_{\infty}$, this expression reduces to
	\begin{equation}\label{wait_for_normalisation}
		-(l+1)h_{\infty}\left(\int_{\X_{\infty}}\W_{\infty}^{l+1}\wedge\Ric\W^k\wedge(\theta_\X)_{n-k-l,\infty} -\muklt\int_{\X_{\infty}}\W_{\infty}^{n+1}\right).
	\end{equation}
	We will once again use the splitting $$\iota_{\infty}^*T\mathcal{X}\cong TX_{\infty}\oplus\pi^*T_{\infty}\proj^1.$$ Applying the Whitney sum formula \cite[§24.3]{tu} for vector bundles $E\rightarrow X$ and $F\rightarrow X$ $$c_k(E\oplus F)=\sum_{i=0}^nc_i(E)\cup c_{n-i}(F),$$  and using the fact that $\pi^*T\proj^1$ is trivial gives that $[(\Ric\W)_{\infty}]=[\Ric\W_{\infty}]$. Algebraically, this is equivalent to proving $$K_\X\vert_{\X_0}=K_{\X_0},$$ which can be done through the adjunction formula. 
		
	Finally, the definition of a test configuration tells us that $\A_{\infty}=\alpha$, and from Lemma \ref{extend_theta}, $[\iota_{\infty}^*\theta_\X]=\Theta$. In particular,
	\begin{align}
		&\int_{\X_{\infty}}\W_{\infty}^n=\int_X\w^n &\text{and}& &\int_{\X_{\infty}}\W_{\infty}^l\wedge\Ric\W_{\infty}^k\wedge(\theta_\X)_{n-l-k,\infty}=\int_X\w^l\wedge\Ric\w^k\wedge\theta,
	\end{align}
	so Equation \eqref{i_love_mu} vanishes by definition of the $\muklt$. This shows that the contributions from the fibre at infinity of $\DFhat$ vanish. 
		
	Using this to rewrite Equation \eqref{everything}, keeping only the contributions from the central fibre, and using the fact that $$(\pi^*\Ric\w_{FS})_0=0,$$ we will obtain the statement of the Proposition. 
\end{proof}
	
\begin{remark}
Using Equation \eqref{laplacian_h}, there is a nice way of understanding $$\lap h-\pi^*\lap\hFS$$ on any connected component $Z_{\lambda}$ of the fixed locus of $\X_0$. The arguments are similar to those used for $\X_{\infty}$, only this time the normal bundle $N_{\lambda}^{\X}$ can be of rank $\geq1$, depending on the dimension of $Z_{\lambda}$. From the short exact sequence $$0\longrightarrow N_{\lambda}^{\X_0}\longrightarrow N_{\lambda}^\X\longrightarrow N_{0}^\X\longrightarrow0,$$ we can locally write $N_{\lambda}^\X\cong \iota_{\lambda}^*N_{0}^\X\oplus N_{\lambda}^{\X_0}$. Furthermore, this splitting is equivariant by the equivariance of $\pi:\X\rightarrow\proj^1$. Similarly as for $N_{\infty}^\X$, we find that the weight of the action on $N_0^\X$ is $1$, which is the weight of the $S^1$-action at $0\in\proj^1$. A proof of this is given by Legendre \cite[Theorem 5.3]{Legendre2020}, and we discuss this further in the next subsection.
	Hence Equation \eqref{laplacian_h} tells us that $$\lap h-\pi^*\lap\hFS=\sum_{i=1}^nw_i=\lap h_0,$$ where $i$ runs over the line bundle summands of $N_{\lambda}^{\X_0}$, and the weights $w_i$ are evaluated at each point.
\end{remark}

\begin{remark}
	As explained in Remark \ref{resolution_of_indeterminacy}, Proposition \ref{Z-stability_localised} could be extended to include auxiliary classes $\Theta$ for which the $\Theta_{\X\backslash\X_0}$ constructed in Lemma \ref{extend_theta} does not extend smoothly to $\Theta_\X$ on $\X$, by passing through a resolution of indeterminacy (as described in \cite[Section 2.1]{Dervan2021}).
\end{remark}

\subsection{Localisation for $Z$-critical metrics}
	
We fix a Kähler metric $\w\in\alpha$. Given a central charge $Z(X,\alpha)$ on $(X,\alpha)$, we are interested in the invariants 
\begin{equation}\label{integrals}
	\int_Xh\Ima\left(e^{-i\varphi}\tilde{Z}(X,\w)\right)=\mathcal{F}^Z(V)
\end{equation}
defined in Equation \eqref{Z-futaki}, where $V$ is the generator of an $S^1$-action on $(X,\alpha)$, and $h$ is a hamiltonian for that action with respect to $\w$. In a way analogous to the Futaki invariant in the case of cscK metrics, these invariants give obstructions to the existence of $Z$-critical metrics in the Kähler class $\alpha$. 
	
We would like to write these integrals as a product of equivariant classes, allowing us once again to apply Theorem \ref{localisation}. 
Because of the arbitrary choice of auxiliary cohomology class $\Theta$ involved in the central charge $Z(X,\alpha)$, one cannot reasonably expect to write the invariants \eqref{integrals} as a product of equivariant classes for \textit{any} central charge. From now on, for any $V\in\mathfrak{h}$, we will restrict ourselves to classes $\Theta$ which contain a representative $\theta$ satisfying $\iota_Y\theta=0$ (in particular such a $\theta$ is equivariantly closed), and will only apply localisation for this choice of $\theta$. 
	
By linearity, using Equation \eqref{miam}, we will only need to localise the invariants $\Fklt$. Because the aim of the next subsection we will be to relate these invariants with the $\DFhat$ from the previous section, it will prove useful to normalise the $\Fklt$ in a way analogous to Lemma \ref{renormalisation} in the case of $Z$-stability. 
\noindent The normalisation is as follows: $$\tilde{Z}(X,\w)\rightarrow\tilde{Z}(X,\w)-\frac{Z(X,\alpha)}{\alpha^n}.$$ To make sure this is a reasonable transformation, we must check that it preserves the $Z$-critical equation:
\begin{align}\label{chaton}
	\Ima\left(e^{-i\varphi}\left(\tilde{Z}(X,\w)-\frac{Z(X,\alpha)}{\alpha^n}\right)\right)
	&=\Ima\left(e^{-i\varphi}\tilde{Z}(X,\w)\right)
	-\Ima\left(\frac{e^{-i\varphi}Z(X,\alpha)}{\alpha^n}\right)\notag\\
	&=\Ima\left(e^{-i\varphi}\tilde{Z}(X,\w)\right).
\end{align}

In order to localise the $\Fhat$, we define the following equivariantly closed forms, for any $\w\in\alpha$:
\begin{align}\label{forms_fut}
	&\alpha_{\text{eq}}(Y):=\w-h_Y &\text{and}&  &c_1(X)_{\text{eq}}(Y):=(\Ricw+\lap h_Y). \notag
\end{align}
Because of our requirement that $\theta$ must satisfy $\iota_Y\theta=0$, every term $\theta_p$ in the decomposition
\begin{align}
	\theta=\sum_{p=0}^n\theta_p, \hspace{10mm} \theta_p\in H^{p,p}(X,\C) \notag
\end{align}
is equivariantly closed, so we can consider their equivariant classes $[\theta_p]$.
	
The following results from our discussion and definitions:
\begin{corollary}\label{Z-futaki_localisable}
	The invariants $\Fhat$ can be written
	\begin{equation}
		\Fhat=-\frac{1}{l+1}\Ima\left(e^{-i\varphi}\left(\alpha_{\text{eq}}^{l+1}\cup c_1(X)_{\text{eq}}^k\cup[\theta_{n-l-k}]-\frac{l+1}{n+1}\muklt\alpha_{\text{eq}}^{n+1}\right)\right). \notag
	\end{equation}
\end{corollary}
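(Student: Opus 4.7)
The plan is to unpack the right-hand side by binomial expansion and recognize it as $\Fklt$ after an integration by parts. Fix $V\in\mathfrak{h}$ generating the $S^1$-action and let $h = h_V$ be its hamiltonian, normalized so that $\int_X h\, \w^n = 0$; then $f_V = h$, and the shift $\tilde Z_{k,l} \to \tilde Z_{k,l} - \muklt$ appearing in the normalization of $\Fhat$ changes the value of the invariant by a complex multiple of $\int_X h\,\w^n = 0$. Hence $\Fhat = \Fklt$ and it suffices to establish the formula for $\Fklt$.

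First I would expand the equivariant cup product $\alpha_{\text{eq}}^{l+1}\cup c_1(X)_{\text{eq}}^k = (\w-h)^{l+1}(\Ricw + \lap h)^k$ via the binomial formula. After wedging with $\theta_{n-l-k}$, only terms of form-degree $2n$ survive integration over $X$; since $h$ and $\lap h$ carry form-degree zero while $\theta_{n-l-k}$ has degree $2(n-l-k)$, only the two summands with total power one on the pair $(h,\lap h)$ contribute, giving the top-degree piece
\[
-(l+1)\,h\,\w^l\wedge\Ricw^k\wedge\theta_{n-l-k}+k(\lap h)\,\w^{l+1}\wedge\Ricw^{k-1}\wedge\theta_{n-l-k}.
\]

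Next I would rewrite $\Fklt(V)=\int_X h\,\Ima(e^{-i\varphi}\tilde Z_{k,l}(X,\w))\,\w^n$ using self-adjointness of $\lap$ with respect to $\w^n$, namely $\int_X h\,\lap f\,\w^n = \int_X f\,\lap h\,\w^n$, to transfer the Laplacian from the quotient $\Ricw^{k-1}\w^{l+1}\theta/\w^n$ onto $h$. Only the $\theta_{n-l-k}$ summand of $\theta$ contributes a top form, so the resulting integrand is exactly $-1/(l+1)$ times the bracketed expression above. Pulling the real factor $h$ inside the $\Ima(e^{-i\varphi}\cdot)$ and collecting terms yields
\[
\Fklt(V) = -\frac{1}{l+1}\Ima\Bigl(e^{-i\varphi}\,\alpha_{\text{eq}}^{l+1}\cup c_1(X)_{\text{eq}}^k\cup[\theta_{n-l-k}]\Bigr).
\]

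Finally, I would observe that the correction $-\frac{l+1}{n+1}\muklt\alpha_{\text{eq}}^{n+1}$ can be inserted freely: the same binomial-degree argument shows that the top-form piece of $(\w-h)^{n+1}$ is $-(n+1)h\w^n$, which integrates to zero by our normalization of $h$. The main subtlety is purely bookkeeping: one must track the factors of $l+1$ and the signs carefully, and apply self-adjointness of $\lap$ component-wise to the complex-valued function $\Ricw^{k-1}\w^{l+1}\theta_{n-l-k}/\w^n$ (which is legitimate by splitting into real and imaginary parts). No genuine analytic obstacle arises, which is consistent with the result being stated as a corollary.
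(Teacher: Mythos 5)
Your proposal is correct and follows essentially the same route as the paper: expand the equivariant product, keep only the top-degree terms (which forces exactly one factor of $h$ or $\lap h$), and use self-adjointness of the Laplacian to match the result with the definition of $\mathcal{F}^Z_{k,l}$. You are somewhat more explicit than the paper about why the $\mu_{k,l}\alpha_{\text{eq}}^{n+1}$ correction and the normalisation shift $\tilde{Z}_{k,l}\mapsto\tilde{Z}_{k,l}-\mu_{k,l}$ cost nothing (both reduce to multiples of $\int_X h\,\w^n=0$), which is a welcome clarification rather than a deviation.
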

\begin{proof}
	This is an explicit calculation, in which we choose only the $n^{th}$ degree terms in the integral:
	\begin{align}
		&\hspace{35mm}(\alpha_{\text{eq}}^{l+1}\cup c_1(X)_{\text{eq}}^k\cup[\theta_{n-l-k}]-\frac{l+1}{n+1}\muklt\alpha_{\text{eq}}^{n+1})(V)\hspace{60mm}\notag\\
		\hspace{2mm}&=\int_X\left((\w-h_V)^{l+1}\wedge(\Ric\w+\lap h_V)^k\wedge\theta_{n-k-l}-\frac{l+1}{n+1}\mu_{k,l}(\w-h_V)^{n+1}\right) \notag,\\
		&=-(l+1)\int_Xh_V(\w^{l}\wedge\Ric\w^k\wedge\theta_{n-l-k})+k\int_X\lap h(\w^{l+1}\wedge\Ric\w^{k-1}\wedge\theta_{n-l-k})-(l+1)\mu_{k,l}\int_Xh_V\w^n. \notag
	\end{align}
	The result then follows by applying self-adjointness of the laplacian.
\end{proof}

The following Corollary is proved by Dervan \cite[Corollary 3.12]{Dervan2021} in an indirect way, but in our case follows immediately from Remark \ref{eq_class_depends_only_on_kahler_class}.

\begin{corollary}
	The invariants $\mathcal{F}^Z$ are independent of the choice of $\w\in\alpha$.
\end{corollary}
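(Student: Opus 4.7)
The plan is to chain together the linear decomposition of $\mathcal{F}^Z$ with the equivariant-class formula from Corollary \ref{Z-futaki_localisable}, and then invoke Remark \ref{eq_class_depends_only_on_kahler_class}. First, I would use Equation \eqref{miam} to write
\[
\mathcal{F}^Z(V)=\sum_{k,l=0}^{n}a_{k,l}\,\mathcal{F}^Z_{k,l}(V),
\]
so that it suffices to show each $\mathcal{F}^Z_{k,l}(V)$ is independent of $\w\in\alpha$. By Corollary \ref{Z-futaki_localisable}, for $V$ generating an $S^1$-action with hamiltonian $h=h_V$ (normalised by $\int_X h\w^n=0$, so that the extra constant term $Z(X,\alpha)/\alpha^n$ introduced in \eqref{chaton} integrates to zero against $h$) we have the identity
\[
\mathcal{F}^Z_{k,l}(V)=-\frac{1}{l+1}\Ima\!\left(e^{-i\varphi}\left(\alpha_{\text{eq}}^{l+1}\cup c_1(X)_{\text{eq}}^{k}\cup[\theta_{n-l-k}]-\tfrac{l+1}{n+1}\muklt\,\alpha_{\text{eq}}^{n+1}\right)(V)\right).
\]

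Next, I would inspect each ingredient on the right-hand side for dependence on the chosen representative $\w\in\alpha$. The phase $\varphi=\arg Z(X,\alpha)$ and the constants $\muklt$ defined in \eqref{muklt} are purely topological. The fixed representative $\theta\in\Theta$ satisfying $\iota_V\theta=0$ is chosen independently of $\w$, so the equivariant classes $[\theta_{p}]$ do not involve $\w$ either. The only potentially $\w$-dependent objects are the equivariant cohomology classes $\alpha_{\text{eq}}=[\w-h]$ and $c_1(X)_{\text{eq}}=[\Ric\w+\lap h]$. This is exactly what Remark \ref{eq_class_depends_only_on_kahler_class} addresses: if $\w'=\w+dd^c\psi$ and $h'=h+d^c\psi(V)$, then $(\w'-h')-(\w-h)=d_{\text{eq}}(id^c\psi)$, and an analogous equivariant primitive exists for the Ricci+laplacian form (using the identity $\iota_V\Ric\w=d\lap h$ from Example \ref{form2} applied to both metrics, together with the transformation formula for $\lap h$ under Kähler potential changes). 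Hence both equivariant classes depend only on $\alpha$, not on the particular Kähler metric.

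Since every factor and constant appearing in the displayed formula for $\mathcal{F}^Z_{k,l}(V)$ is independent of $\w\in\alpha$, so is $\mathcal{F}^Z_{k,l}(V)$, and by linearity in \eqref{miam}, so is $\mathcal{F}^Z(V)$. The only subtle point of this argument is the verification that the $\w$-dependence of $h$ (which one is forced to change together with $\w$ in order to keep a hamiltonian for the same vector field) is absorbed into an equivariantly exact correction; this is precisely the content of Remark \ref{eq_class_depends_only_on_kahler_class}, and it is why an equivariant-cohomology proof is far more direct than the integration-by-parts argument used in \cite[Corollary 3.12]{Dervan2021}.
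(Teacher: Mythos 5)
Your proof is correct and takes essentially the same route as the paper, which disposes of the corollary in one line by noting it "follows immediately from Remark \ref{eq_class_depends_only_on_kahler_class}" --- tacitly via the decomposition \eqref{miam} and the equivariant-class expression of Corollary \ref{Z-futaki_localisable}, which is exactly the chain of references you make explicit. Your parenthetical care about the hamiltonian normalisation is a reasonable extra check (the $\muklt$-term in Corollary \ref{Z-futaki_localisable} in fact makes the expression insensitive to the additive constant in $h$), but it does not change the argument.
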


Once again, the fact that the $\Fhat$ can be written as a product of equivariant classes enables us to apply localisation, implying that the only contributions really come from the fixed locus of the action. So any obstruction to the existence of a $Z$-critical metric in the form of a non-vanishing of $\mathcal{F}^Z(V)$ for some $V\in\mathfrak{h}$ must come from the fixed locus of the corresponding $S^1$-action. 
Explicitly, writing the fixed locus as $$\Fix_{S^1}(X)=\bigsqcup_{\lambda\in\Lambda}Z_{\lambda},$$ the following proposition gives a formula for $\mathcal{F}^Z(V)$.
	
\begin{proposition}\label{Z-metrics_localised}
	Consider a central charge $Z(X,\alpha)$ on $(X,\alpha)$, such that the auxiliary class $\Theta$ contains a representative $\theta$ satisfying $\iota_Y\theta=0$ for an $S^1$-action induced by $V\in\mathfrak{h}$.
	Then we have $$\mathcal{F}^Z(V)=-\frac{(-2\pi)^n}{l+1}\sum_{k,l=0}^n\sum_{\lambda\in\Lambda}\Ima\left(a_{k,l}e^{-i\varphi}\int_{Z_\lambda}\frac{\iota_{\lambda}^*((\w-h_V)^{l+1}\wedge(\Ric\w+\lap h_V)^k\wedge\theta_{n-l-k}-\frac{l+1}{n+1}\mu_{k,l}(\w-h)^{n+1})}{\euler(N_{\lambda}^X)}\right)$$
\end{proposition}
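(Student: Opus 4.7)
The plan is to combine the linear decomposition \eqref{miam} with Corollary \ref{Z-futaki_localisable} and apply the Atiyah--Bott localisation formula (Theorem \ref{localisation}) term by term. No essentially new idea is required at this point; the substance has already been packaged into Corollary \ref{Z-futaki_localisable}, which realises the integrand as an integral of a product of equivariantly closed forms on $X$.

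First I would write $\mathcal{F}^Z(V)=\sum_{k,l=0}^n a_{k,l}\mathcal{F}^Z_{k,l}(V)$ using \eqref{miam}, and substitute each $\mathcal{F}^Z_{k,l}(V)$ by the formula from Corollary \ref{Z-futaki_localisable}:
$$\mathcal{F}^Z_{k,l}(V)=-\frac{1}{l+1}\Ima\!\left(e^{-i\varphi}\int_X\!\left(\alpha_{\text{eq}}^{l+1}\cup c_1(X)_{\text{eq}}^k\cup[\theta_{n-l-k}]-\frac{l+1}{n+1}\muklt\,\alpha_{\text{eq}}^{n+1}\right)(V)\right).$$
The hypothesis $\iota_V\theta=0$ is essential here, and I would note that it implies $\iota_V\theta_p=0$ for each graded piece $\theta_p$ (because $\iota_V$ preserves the bigrading up to the standard shift), so each $[\theta_p]$ is a well-defined equivariant class. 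Together with Examples \ref{form1} and \ref{form2}, which supply the equivariant closedness of $\alpha_{\text{eq}}$ and $c_1(X)_{\text{eq}}$, this shows that the whole integrand is equivariantly closed, so that Theorem \ref{localisation} applies.

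Next I would apply Theorem \ref{localisation} directly to each integral, which converts it into $(-2\pi)^n$ times the sum over the connected components $Z_\lambda$ of $\Fix_{S^1}(X)$ of $\iota_\lambda^*(\cdot)/\euler(N_\lambda^X)$. The real scalar $-\frac{(-2\pi)^n}{l+1}$ can then be pulled outside the $\Ima$, whereas the complex scalars $a_{k,l}$ and $\muklt$ must remain inside $\Ima$ alongside $e^{-i\varphi}$. After exchanging the order of the summations over $(k,l)$ and $\lambda$, one recovers precisely the stated formula.

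The main (modest) obstacle is really bookkeeping: keeping track of which constants are real and can therefore be extracted from $\Ima(\cdot)$ versus which are complex and must remain inside, and verifying that the hypothesis $\iota_V\theta=0$ does propagate to each graded piece $\theta_p$ so that the equivariant cohomology classes $[\theta_p]$ are individually meaningful. Beyond this, the proposition is the direct algebraic assembly of Corollary \ref{Z-futaki_localisable} and Theorem \ref{localisation}.
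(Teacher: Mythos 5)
Your proposal is correct and follows essentially the same route as the paper: decompose $\mathcal{F}^Z(V)$ via \eqref{miam}, invoke Corollary \ref{Z-futaki_localisable} to express each $\mathcal{F}^Z_{k,l}$ as a product of equivariant classes, and apply Theorem \ref{localisation} term by term. Your extra care about which scalars are real (and may leave $\Ima$) versus complex (the $a_{k,l}$ and $\muklt$, which must stay inside) and about $\iota_V\theta=0$ passing to each graded piece $\theta_p$ is consistent with, and slightly more explicit than, the paper's argument.
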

	
\begin{proof}
	From the discussion above, the invariant $\mathcal{F}^Z(V)$ satisfies
	\begin{align}
		\mathcal{F}^Z(V)&=\sum_{k,l=0}^n\mathcal{F}^Z_{k,l}(V),\notag\\
		&=-\frac{1}{l+1}\sum_{k,l=0}^n\Ima\left(a_{k,l}e^{-i\varphi}\left(\alpha_{\text{eq}}^{l+1}\cup c_1(X)_{\text{eq}}^k\cup[\theta_{n-l-k}]-\frac{l+1}{n+1}\muklt\alpha_{\text{eq}}^{n+1}\right)\right). \notag
	\end{align}
	By linearity, we only need to show that
	\begin{align}
		\Fhat=-\frac{(-2\pi)^n}{l+1}\sum_{\lambda\in\Lambda}\int_{Z_{\lambda}}\frac{\iota_{\lambda}^*((\w-h_V)^{l+1}\wedge(\Ric\w+\lap h_Y)^k\wedge\theta_{n-l-k}-\frac{l+1}{n+1}\muklt(\w-h_V)^{n+1})}{\euler(N_{\lambda}^X)}, \notag
	\end{align}
	which is a direct consequence of Theorem \ref{localisation} applied to the equivariant class of $\mathcal{F}^Z_{k,l}$ given in Corollary \ref{Z-futaki_localisable}.
\end{proof}

\begin{remark}
	Note that by Equation \eqref{chaton}, we could equivalently have written $$\mathcal{F}^Z_{k,l}=-\frac{1}{l+1}\Ima\left(e^{-i\varphi}\left(\alpha_{\text{eq}}^{l+1}\cup c_1(X)_{\text{eq}}^k\cup[\theta_{n-l-k}]\right)\right),$$ which would have given the (simpler) formula
	\begin{align}
		\mathcal{F}^Z(V)=-\frac{(-2\pi)^n}{l+1}\sum_{k,l=0}^n\sum_{\lambda\in\Lambda}\Ima\left(a_{k,l}e^{-i\varphi}\int_{Z_\lambda}\frac{\iota_{\lambda}^*((\w-h_V)^{l+1}\wedge(\Ric\w+\lap h_V)^k\wedge\theta_{n-l-k})}{\euler(N_{\lambda}^X)}\right). \notag
	\end{align}
	This, of course, is equivalent by virtue of Equation \eqref{chaton}. Our motivation for adding the normalisation term will become apparent in the following section, where we will link central charges $Z(\X,\A)$ on a test configuration with the quantity $\mathcal{F}^Z(V)$ on its central fibre, where $V$ is the generator of the $S^1$-action on the test configuration. Recall from the previous section that the normalisation $Z(\X,\A)\rightarrow\Zhat$ was necessary for the contribution from the fibre at infinity to vanish (see proof of Proposition \ref{Z-stability_localised}). Then, our normalisation $$\tilde{Z}(X,\w)\rightarrow \tilde{Z}(X,\w)-\frac{Z(X,\alpha)}{\alpha^n}$$ is the corresponding normalisation to make on the central fibre. All of this will be made more precise in the proof of Theorem 
	\ref{central_fibre_theorem}.
\end{remark}
	
From Proposition \ref{Z-metrics_localised}, we can deduce an immediate corollary, which makes explicit a way in which the choice of $\theta\in\Theta$ can affect the use of the invariants $\mathcal{F}^Z$ as obstructions to the existence of $Z$-critical metrics.
	
\begin{corollary}
	The invariants $$\Fklt=\int_Xh\Ima\left(e^{-i\varphi}\left(\frac{\Ric\w^k\wedge\w^l\wedge\theta}{\w^n}-\frac{k}{l+1}\lap\frac{\Ric\w^{k-1}\wedge\w^{l+1}\wedge\theta}{\w^n}\right)\right)\w^n$$ vanish if $\theta_{n-l-k}$ is equivariantly exact, that is if $\theta$ is exact or $\theta=\iota_V\beta$ for some form $\beta$.
\end{corollary}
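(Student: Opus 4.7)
The strategy is to use the simplified equivariant-class expression for $\mathcal{F}^Z_{k,l}$ given in the remark following Proposition \ref{Z-metrics_localised}, namely
$$\mathcal{F}^Z_{k,l}(V)=-\frac{1}{l+1}\Ima\left(e^{-i\varphi}\left(\alpha_{\text{eq}}^{l+1}\cup c_1(X)_{\text{eq}}^k\cup[\theta_{n-l-k}]\right)(V)\right).$$
The right-hand side is the evaluation at $V$ of a product of equivariant cohomology classes on $X$, so it depends only on the equivariant class $[\theta_{n-l-k}]$, not on the particular representative chosen.

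The main step is then immediate: if $\theta_{n-l-k}$ is equivariantly exact, the class $[\theta_{n-l-k}]$ is zero, so the whole cup product vanishes in equivariant cohomology on $X$, and integration over the closed manifold $X$ yields $\mathcal{F}^Z_{k,l}(V)=0$. This is the heart of the corollary.

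It remains to verify that each of the two listed conditions does imply equivariant exactness of $\theta_{n-l-k}$. If $\theta=\iota_V\beta$ with $\beta$ chosen $S^1$-invariant and closed (which can be arranged by averaging, using the standing hypothesis $\iota_V\theta=0$), then $d_{\text{eq}}\beta=d\beta-\iota_V\beta=-\theta$, so $\theta$ and hence its bidegree component $\theta_{n-l-k}$ are equivariantly exact. If instead $\theta=d\lambda$ is exact, averaging $\lambda$ over $S^1$ yields an invariant primitive, and combined with $\iota_V\theta=0$, a standard Hodge-theoretic argument on the Kähler manifold $X$ (using the $\partial\overline{\partial}$-lemma on the $(n-l-k,n-l-k)$-type piece) produces an equivariant form $\mu$ with $d_{\text{eq}}\mu=\theta_{n-l-k}$.

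The only subtlety is this last bidegree verification: one must ensure that after averaging and type decomposition, the primitive constructed really maps to $\theta_{n-l-k}$ under $d_{\text{eq}}$ rather than merely under $d$. The $\theta=\iota_V\beta$ case, by contrast, follows essentially from the definition of the equivariant differential, and in both cases the conclusion is obtained by pure equivariant cohomological vanishing applied to the displayed formula above.
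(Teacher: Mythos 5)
Your core argument coincides with the paper's: both reduce the claim to the fact that $\mathcal{F}^Z_{k,l}$ is a pairing of equivariant cohomology classes (Corollary \ref{Z-futaki_localisable}), hence by linearity depends only on $[\theta_{n-l-k}]$, which is zero when $\theta_{n-l-k}$ is equivariantly exact. The extra verification you append, that each of the two listed conditions implies equivariant exactness, is not attempted in the paper, which reads the ``that is'' clause as merely unfolding the definition $d_{\text{eq}}\mu=d\mu-\iota_V\mu$; it is also the only fragile part of your write-up, since mere $d$-exactness of $\theta$ does not in general yield equivariant exactness of $\theta_{n-l-k}$ (the map $H^*_{S^1}(X)\to H^*(X)$ is not injective), so the Hodge-theoretic step you gesture at should be dropped in favour of taking equivariant exactness as the actual hypothesis.
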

\begin{proof}
	If $\theta_{n-l-k}$ is equivariantly exact, it is equivariantly closed, and thus $\Fklt$ can be written as a product of equivariant classes. Then, by linearity in $\theta_{n-l-k}$, any $\Fklt$ must vanish for any equivariantly exact $\theta_{n-l-k}$.
\end{proof}

\subsection{The smooth central fibre case}\label{smoot_central_fiber_case}

Let $(\X,\A)$ be a test configuration for a Kähler manifold $(X,\alpha)$. We are interested in the case where the central fibre $(X_0,\A_0)$ is smooth, in which case $\A_0$ is a Kähler metric on $X_0$ and we fix a metric $\W_0=\iota_0^*\W$ in $\A_0$. In this specific case, we consider the $S^1$-action on $X_0$ induced by the $S^1$-action on $\X$. We denote its fundamental vector field by $V_0$, and a hamiltonian for this vector field $h_0$ with respect to $\W_0$. Of course, these are restrictions to $\X_0$ of the fundamental vector field $V$ and the hamiltonian $h$ with metric $\W$ of the $S^1$-action on $\X$.
	
\begin{lemma}
	Let $\Theta$ be an auxiliary class on a Kähler manifold $(X,\alpha)$, and $(\X,\A)$ be any test configuration for $(X,\alpha)$. If the central fibre $(\X_0,\A_0)$ is smooth, then it inherits $S^1$-equivariant auxiliary classes $(\Theta_0)_p$ for each $p\in\{0,\dots,n\}$ in the decomposition $$\Theta_0=\oplus_{p=0}^n(\Theta_0)_p, \hspace{3mm} (\Theta_0)_p\in H^{p,p}(X,\C),$$ where the equivariance is with respect to the $S^1$-action generated by $V_0$ on $\X_0$. 
\end{lemma}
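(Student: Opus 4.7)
The plan is to leverage the smoothly extended equivariant class $\Theta_\X$ on $\X$ constructed in the discussion following Corollary \ref{corollary_extend_theta}, and to pull it back to $\X_0$ via the inclusion $\iota_0\colon\X_0\hookrightarrow\X$. The key geometric observation is that $\iota_0$ is $S^1$-equivariant: since $\pi\colon\X\to\proj^1$ is $\C^*$-equivariant and $0\in\proj^1$ is fixed by the underlying $S^1$-action (by Remark \ref{P1_action}), the $S^1$-action on $\X$ preserves $\X_0$. Because $\X_0$ is smooth by hypothesis, the restricted action is an honest $S^1$-action on a complex manifold and $V_0=V|_{\X_0}$ is its fundamental vector field.

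Once equivariance of $\iota_0$ is in hand, the construction is essentially formal. For each $p\in\{0,\dots,n\}$, pick a representative $(\theta_\X)_p$ of the equivariant class $(\Theta_\X)_p$ and define $(\Theta_0)_p := [\iota_0^*(\theta_\X)_p]_{\text{eq}}$. I first verify that pullback commutes with the equivariant differential: naturality of $d$ and of interior contraction under equivariant maps gives $d_{\text{eq}}\iota_0^*(\theta_\X)_p = \iota_0^* d_{\text{eq}}(\theta_\X)_p = 0$, so each pullback is equivariantly closed on $\X_0$. Next, if $(\theta_\X)_p$ and $(\theta_\X)_p'$ differ by $d_{\text{eq}}\beta$, then their pullbacks differ by $d_{\text{eq}}\iota_0^*\beta$, so the equivariant class $(\Theta_0)_p$ does not depend on the chosen representative.

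To complete the proof, I check that the resulting $(\Theta_0)_p$ have the correct Hodge type and normalisation. The type $(p,p)$ is preserved because $\iota_0$ is a holomorphic inclusion of complex submanifolds. The $(0,0)$-component of $\Theta_0$ is $1$, because the $(0,0)$-component of $\Theta_\X$ is $1$ (inherited from $\Theta$ through the construction in Lemma \ref{extend_theta} and through the smooth extension), and pullback of the constant function $1$ is again $1$. There is no genuine obstacle beyond these routine checks; the statement is essentially a functoriality assertion for equivariant cohomology under the equivariant holomorphic inclusion $\iota_0$, which becomes available precisely because the hypothesis of smoothness of $\X_0$ lets us work on a smooth submanifold rather than on the singular locus where the proof of Lemma \ref{extend_theta} needed to avoid $\X_0$.
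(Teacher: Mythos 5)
Your proposal is correct and follows essentially the same route as the paper: pull back a representative of $\Theta_\X$ along the equivariant inclusion $\iota_0$, observe that each graded piece remains equivariantly closed, and establish independence of the representative via the commutation $\iota_0^*(d_{\text{eq}})_\X=(d_{\text{eq}})_{\X_0}\iota_0^*$. The extra checks you include (equivariance of $\iota_0$, Hodge type, normalisation of the $(0,0)$-component) are routine and consistent with the paper's argument.
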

\begin{proof}
	From any auxiliary class $\Theta$ on $(X,\alpha)$, we use Lemma \ref{extend_theta} to obtain an $S^1$-equivariant class $\Theta_\X$ on $\X$. Corollary \ref{corollary_extend_theta} tells us that each term $(\Theta_0)_p$ in the decomposition is equivariantly closed. For any $\theta_\X\in\Theta_\X$, we consider $$\theta_0:=\iota_0^*\theta_\X.$$ 
	Clearly, each term $(\theta_0)_{n-l-k}$ is equivariantly closed, hence defines an $S^1$-equivariant class, with the $S^1$-action on $\X_0$ generated by $V_0$. 
	It now remains to show that the equivariant class $\Theta_0$ of $\theta_0$ is independent of the choice of $\theta_\X\in\Theta_\X$. This follows from the fact that $\iota_0^*(d_{\text{eq}})_\X=(d_{\text{eq}})_{\X_0}\iota_0^*$, where $(d_{\text{eq}})_\X=d-\iota_V$ on $\X$, and $(d_{\text{eq}})_{\X_0}=d-\iota_{V_0}$ on $\X_0$.
\end{proof}

We continue our previous notation and write the fixed locus of this action as a union of its connected components as: $$\Fix_{S^1}(X)=\bigsqcup_{\lambda\in\Lambda}Z_{\lambda}.$$

This subsection aims to give a new, more direct proof of a result proved by Dervan \cite[Proposition 3.11]{Dervan2021} using the localisation formulas \eqref{Z-metrics_localised} and \eqref{Z-stability_localised} from the two previous subsections:
	
\begin{theorem}\label{central_fibre_theorem}
	Let $(\X,\A)$ be a test configuration for a Kähler manifold $(X,\alpha)$, with smooth central fibre $(\X_0,\A_0)$. A central charge $Z(X,\alpha)$ induces a central charge $Z(\X_0,\A_0)$ on $\X_0$ by taking $\Theta_0:=[\iota_{\X_0}^*\theta_\X]$. We write $\varphi_0$ for its argument. 
	Then, for any $\theta_0\in\Theta_0$, $$\Ima\left(e^{-i\varphi}\tilde{Z}(\X,\A)\right)=-2\pi\mathcal{F}^{Z(\X_0,\A_0)}(V_0)=-2\pi\int_{\X_0}h_0\Ima\left(e^{-i\varphi_0}\tilde{Z}(\X_0,\A_0)\right).$$
\end{theorem}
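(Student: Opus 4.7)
The plan is to use the two localisation formulas from Propositions~\ref{Z-stability_localised} and~\ref{Z-metrics_localised} to express both sides of the identity as sums of integrals over the same fixed locus, and then match them term by term. Using the decompositions $Z(\X,\A)=\sum_{k,l}\tfrac{a_{k,l}}{l+1}\DFklt$ and $\mathcal{F}^{Z(\X_0,\A_0)}(V_0)=\sum_{k,l}a_{k,l}\mathcal{F}^{Z(\X_0,\A_0)}_{k,l}(V_0)$, together with the equality $\Ima(e^{-i\varphi}Z(\X,\A))=\Ima(e^{-i\varphi}\mathcal{Z}(\X,\A))$ from Lemma~\ref{renormalisation}, the problem reduces by linearity to a statement for each fixed pair $(k,l)$: I must compare the formula from Proposition~\ref{Z-stability_localised} for $\DFhat$ with the formula from Proposition~\ref{Z-metrics_localised} applied to $\mathcal{F}^{Z(\X_0,\A_0)}_{k,l}(V_0)$ on $(\X_0,\A_0)$. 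Before doing so I would note that $\varphi_0=\varphi$, since $\pi$ is a smooth proper family with generic fibre $(X,\alpha)$ and the smooth central fibre inherits the same fibre-wise intersection numbers $\A_0^l\cup c_1(\X_0)^k\cup\Theta_0=\alpha^l\cup c_1(X)^k\cup\Theta$, so $Z(\X_0,\A_0)=Z(X,\alpha)$.

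Next I identify the fixed loci and compare normal bundles. The $S^1$-action on $\X_0$ is by construction the restriction of the $S^1$-action on $\X$, so $\Fix_{S^1}(\X_0)=\bigsqcup_\lambda Z_\lambda$. For each $Z_\lambda$ the normal bundle splits equivariantly as $N_\lambda^{\X}\cong N_\lambda^{\X_0}\oplus \iota_\lambda^*N_0^{\X}$; as established in the Remark following Proposition~\ref{Z-stability_localised}, the $S^1$-action on $N_0^{\X}\cong\pi^*T_0\proj^1$ has weight $+1$ and this bundle is trivial, so multiplicativity of the equivariant Euler class together with formula~\eqref{euler_class} gives $\euler(N_\lambda^{\X})=-\euler(N_\lambda^{\X_0})$. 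For the integrands on $Z_\lambda$ I then match: the identifications $\iota_\lambda^*\W=\iota_\lambda^*\W_0$, $h|_{Z_\lambda}=h_0|_{Z_\lambda}$ and $\iota_\lambda^*\theta_\X=\iota_\lambda^*\theta_0$ are immediate, the equality $[(\Ric\W)_0]=[\Ric\W_0]$ comes from the proof of Proposition~\ref{Z-stability_localised}, and $\iota_\lambda^*(\lap h-\pi^*\lap\hFS)=\iota_\lambda^*\lap h_0$ is the content of the subsequent Remark. Inserting these identifications, the integrals over each $Z_\lambda$ in the formula of Proposition~\ref{Z-stability_localised} coincide with those in the formula of Proposition~\ref{Z-metrics_localised} applied on $(\X_0,\A_0)$.

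The main obstacle is the careful bookkeeping of prefactors and signs. Proposition~\ref{Z-stability_localised} produces an overall factor $(-2\pi)^{n+1}$, Proposition~\ref{Z-metrics_localised} produces $(-2\pi)^n$, and the relation $\euler(N_\lambda^{\X})=-\euler(N_\lambda^{\X_0})$ contributes a factor $-1$; these should combine to exactly the factor $-2\pi$ in the statement. In parallel one must verify that the normalisation $Z(\X,\A)\to\mathcal{Z}(\X,\A)$ on the test-configuration side matches the normalisation $\tilde{Z}(X,\w)\to\tilde{Z}(X,\w)-\tfrac{Z(X,\alpha)}{\alpha^n}$ used to define $\mathcal{F}^Z$ on the central fibre: this is precisely why both localisation formulas carry the matching term involving $\muklt$, and ensures that the subtractions on either side restrict compatibly through $\iota_0$ to $\X_0$.
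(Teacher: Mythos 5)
Your proposal is correct and follows essentially the same route as the paper's proof: reduce by linearity to fixed $(k,l)$, localise both $\DFhat$ and $\mathcal{F}^{Z(\X_0,\A_0)}_{k,l}$ over the common fixed locus in $\X_0$, match integrands via the equivariant splitting $N_\lambda^{\X}\cong N_\lambda^{\X_0}\oplus\iota_\lambda^*N_0^{\X}$ and the weight/laplacian identities, use $\euler(N_\lambda^{\X})=-\euler(N_\lambda^{\X_0})$, and track the $(-2\pi)$ prefactors. The paper packages the normal-bundle facts as Proposition \ref{legendre_prop} (quoting Legendre) rather than re-deriving them from the earlier Remark, but the mathematical content and the bookkeeping are identical to yours.
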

	
The style of proof we will use was used by Legendre in \cite[Theorem 5.3]{Legendre2020} in the case of K-stability and cscK metrics. In particular, our proof relies heavily on the following lemma, which is proved by Legendre in her proof of the theorem just mentioned:
	
\begin{proposition}\cite[Theorem 5.2]{Legendre2020}\label{legendre_prop}
	The following hold for any test configuration $(\X,\A)$ for a Kähler manifold $(X,\alpha)$ with smooth central fibre:
	\begin{enumerate}
		\item the line bundle $N_0^\X$ is topologically trivial, which in particular implies $\iota_{\lambda}^*[\Ric\W]=\iota_{\lambda}^*[\Ric\W_0]$;
		\item for any $Z_{\lambda}$, there is a local splitting $$N_{\lambda}^{\X}\cong\iota^*_0N_{\X_0}^{\X}\oplus N_{\lambda}^{\X_0},$$ for which the weight of the action on the line bundle $\iota_{\lambda}^*N_0^\X$ is $1$;
		\item $\euler(N_0^{\mathcal{X}})=-\euler(N_0^{X_0}).$
	\end{enumerate}
\end{proposition}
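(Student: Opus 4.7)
The plan is to apply Proposition \ref{Z-stability_localised} term by term to the decomposition $\Zhat=\sum_{k,l}\frac{a_{k,l}}{l+1}\DFhat$ established in Section \ref{localizing_Z-stability}, then use Proposition \ref{legendre_prop} to recognise the resulting integrand over each fixed component $Z_\lambda\subset\X_0$ as the integrand appearing in Proposition \ref{Z-metrics_localised} applied to $\mathcal{F}^{Z(\X_0,\A_0)}_{k,l}(V_0)$. The contribution from the fibre at infinity has already been shown to vanish in the proof of Proposition \ref{Z-stability_localised}, so only the components $Z_\lambda\subset\X_0$ need analysis.

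Writing $i_\lambda:Z_\lambda\hookrightarrow\X$ and $j_\lambda:Z_\lambda\hookrightarrow\X_0$ for the inclusions, so that $i_\lambda=\iota_0\circ j_\lambda$, I would make the following identifications on each $Z_\lambda$: tautologically $i_\lambda^*\W=j_\lambda^*\W_0$ and $i_\lambda^*h=j_\lambda^*h_0$; cohomologically $i_\lambda^*[\Ric\W]=j_\lambda^*[\Ric\W_0]$ by part (1) of Proposition \ref{legendre_prop}; by construction $i_\lambda^*(\theta_\X)_{n-k-l}=j_\lambda^*(\theta_0)_{n-k-l}$; and $\euler(N_\lambda^\X)=-\euler(N_\lambda^{\X_0})$ by part (3) (or directly from part (2) together with Equation \eqref{euler_class} and topological triviality of $N_0^\X$). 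The most delicate of these is the Laplacian correction: by part (2), the weight on $\iota_\lambda^*N_0^\X$ is $1$, so Equation \eqref{laplacian_h} gives $\lap h\vert_{Z_\lambda}=-2+\lap h_0\vert_{Z_\lambda}$, while from Remark \ref{P1_action} one computes $(\pi^*\lap\hFS)\vert_{\X_0}=-2$. The two cancel to yield $(\lap h-\pi^*\lap\hFS)\vert_{Z_\lambda}=\lap h_0\vert_{Z_\lambda}$. Consequently the integrand of Proposition \ref{Z-stability_localised} equals $-1$ times the integrand of Proposition \ref{Z-metrics_localised} for $(\X_0,\A_0)$.

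Putting the prefactors together, $\DFhat=-(-2\pi)^{n+1}\sum_\lambda\int_{Z_\lambda}j_\lambda^*(\cdots)/\euler(N_\lambda^{\X_0})$, whereas Proposition \ref{Z-metrics_localised} reads $\mathcal{F}^{Z(\X_0,\A_0)}_{k,l}(V_0)=-\frac{(-2\pi)^n}{l+1}\Ima(e^{-i\varphi_0}\sum_\lambda\int_{Z_\lambda}(\cdots))$. Multiplying by $\frac{a_{k,l}}{l+1}$, taking $\Ima(e^{-i\varphi}\cdot)$ and summing over $(k,l)$ then gives $\Ima(e^{-i\varphi}\Zhat)=-2\pi\,\mathcal{F}^{Z(\X_0,\A_0)}(V_0)$, since the identity $(-2\pi)^{n+1}=-2\pi\cdot(-2\pi)^n$ together with the sign in $\euler(N_\lambda^\X)=-\euler(N_\lambda^{\X_0})$ combine to yield the factor $-2\pi$. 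The arguments match, $\varphi=\varphi_0$, because all fibres of a smooth test configuration are diffeomorphic (Ehresmann), so the intersection numbers on $(X,\alpha)$ and $(\X_0,\A_0)$ coincide and hence $Z(X,\alpha)=Z(\X_0,\A_0)$. The second equality $\mathcal{F}^{Z(\X_0,\A_0)}(V_0)=\int_{\X_0}h_0\Ima(e^{-i\varphi_0}\tilde{Z}(\X_0,\A_0))$ is definitional by Equation \eqref{Z-futaki} together with Remark \ref{fh}, and $\Ima(e^{-i\varphi}Z(\X,\A))=\Ima(e^{-i\varphi}\Zhat)$ is Lemma \ref{renormalisation}.

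The main obstacle is the careful bookkeeping of prefactors and signs: the factors $\frac{1}{l+1}$ and $a_{k,l}$ must be carried consistently through the localisation, and two independent signs (one from $\euler(N_\lambda^\X)=-\euler(N_\lambda^{\X_0})$, one absorbed into $(-2\pi)^{n+1}$) must combine to produce exactly the $-2\pi$ in the statement. Conceptually, the crucial identity is the cancellation $(\lap h-\pi^*\lap\hFS)\vert_{Z_\lambda}=\lap h_0\vert_{Z_\lambda}$, which is precisely what the equivariant correction $-\pi^*[\Ric\w_{FS}]$ in the definition of $\ceq$ was engineered to accomplish, and what makes $\ceq$ restrict on $Z_\lambda$ to the equivariantly closed class playing the role of $c_1(\X_0)_{\text{eq}}$ on the central fibre.
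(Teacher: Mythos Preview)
Your proposal does not prove the stated Proposition \ref{legendre_prop} at all; it proves Theorem \ref{central_fibre_theorem} instead. Worse, it \emph{uses} all three parts of Proposition \ref{legendre_prop} as ingredients (you explicitly invoke parts (1), (2), (3) to identify the Ricci classes, compute the Laplacian correction, and compare the Euler classes), so as an argument for the proposition itself it is circular. Proposition \ref{legendre_prop} is quoted from Legendre's paper \cite[Theorem 5.2]{Legendre2020} and is not proved in this paper; the only content the paper adds is the remark that (3) follows from (1) and (2) via the short exact sequence of normal bundles and the multiplicativity of the equivariant Euler class. A proof of the proposition would require establishing the topological triviality of $N_0^\X$ and identifying the weight on $\iota_\lambda^*N_0^\X$ directly from the structure of the test configuration and the equivariance of $\pi$, none of which appears in your write-up.

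If your intention was actually to prove Theorem \ref{central_fibre_theorem}, then your argument is essentially the paper's own proof: you localise $\DFhat$ via Proposition \ref{Z-stability_localised}, use Proposition \ref{legendre_prop} to rewrite each contribution over $Z_\lambda$ in terms of data intrinsic to $\X_0$ (in particular the cancellation $(\lap h-\pi^*\lap\hFS)\vert_{Z_\lambda}=\lap h_0\vert_{Z_\lambda}$), match the result against the localised expression for $\mathcal{F}^{Z(\X_0,\A_0)}_{k,l}(V_0)$ from Proposition \ref{Z-metrics_localised}, and track the factor $-2\pi$. That is exactly the paper's route.
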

	
\begin{remark}
	The third fact actually follows simply from the first two. From the short exact sequence $$0\longrightarrow N_{\lambda}^{\X_0}\longrightarrow N_{\lambda}^\X\longrightarrow N_{X_0}^\X\longrightarrow 0,$$ we apply the the duality formula \cite[§4]{zeroes} $$e(N_Z^{\mathcal{X}})=e(N_Z^{\X_0})\cup\iota_Z^*e(N_0^{\mathcal{X}}).$$ Applying Equation \eqref{euler_class} and using the first two facts, we can see that $e(N_0^{\X})=-1$, which implies the result. 
\end{remark}
	
\begin{remark}
	Note that the actual result proved by Legendre was $$\euler(N_0^{\mathcal{X}})=-\frac{\euler(N_0^{X_0})}{2\pi},$$ and that the difference here is in the convention used in the localisation formula \ref{localisation} and in the definition of the equivariant Euler class \eqref{euler_class}.
\end{remark}

\begin{proof}[Proof of Theorem \ref{central_fibre_theorem}]
	We start by noticing that $$\varphi_0=\varphi.$$ This follows from Ehresmann's Lemma and from the fact that $$\int_{\X_t}\iota_t^*\tilde{Z}(\X_t,\W_t)$$ is constant and equal to $Z(X,\alpha)$ for any $t\in(\proj^1\backslash\{0\})$  (this follows from the construction of $\Theta_{\X\backslash\X_0}$ in Lemma \ref{extend_theta}). Hence $$\int_{\X_0}\tilde{Z}(\X_0,\W_0)=Z(X,\alpha).$$
	Then, by linearity, it is sufficient to show that, for any $k,l=0\dots,n$, $$\frac{1}{l+1}\Ima\left(e^{-i\varphi}\DFhat\right)=\mathcal{F}_{k,l}^{Z(\X_0,\A_0)}(V_0).$$
	From Proposition \ref{Z-stability_localised}, we write
	\begin{align}\label{hum}
		\DFhat
		&=(-2\pi)^{n+1}\sum_{\lambda\in\Lambda}
		\int_{Z_{\lambda}}
		\frac{\iota_{\lambda}^*((\W-h)^{l+1}\wedge(\Ric\W+(\lap h-\pi^*\lap\hFS))^k\wedge(\theta_\X)_{n-k-l})}{\euler(N_{\lambda}^\X)} \notag
		\\
		&\hspace{25mm}
		-\frac{l+1}{n+1}\muklt
		\int_{Z_{\lambda}}
		\frac{\iota_{\lambda}^*(\W-h)^{n+1}}{\euler(N_{\lambda}^\X)}.
	\end{align}
	We will now use the fact that $\iota_{\lambda}^*=\iota_{\lambda}^*\iota_0^*$, combined with Proposition \ref{legendre_prop} to get 
	\begin{align}
		&\iota_{\lambda}^*(\W-h)=\iota_{\lambda}^*(\W_0-h_0) &\mbox{and}\hspace{7mm} &\iota_{\lambda}^*(\Ric\W+(\lap h-\pi^*\lap\hFS))=\iota_{\lambda}^*(\Ric\W_0+((\lap h)_0-(\pi^*\lap\hFS)_0)). \notag
	\end{align}
	Let us study in more detail the term $$(\lap h-\pi^*\lap\hFS)_0.$$ 
	At any point $z\in Z_{\lambda}$, the normal bundle $N_{\lambda}^\X$ locally splits as $$N_{\lambda}^\X\cong N_{\lambda}^{\X_0}\oplus\iota_{\lambda}^*N_{\X_0}^{\X}.$$ 
	Point $(2)$ of Proposition \ref{legendre_prop} tells us that the weight coming from the $N_{\X_0}^\X$ component is $1$, which means from Equation \eqref{laplacian_h} that $$(\lap h)_0=-2\left(1+\sum_{i=1}^{n-n_{\lambda}}w_i\right),$$ where $i$ runs over the line bundle components of $\iota_{\lambda}^*N_{\lambda}^{\X_0}$. 
	Because $$(\pi^*\lap\hFS)_0=-2$$ at any point $z\in Z_{\lambda}$ by Remark \ref{P1_action}, $$(\lap h)_0-(\pi^*\lap\hFS)_0=-2\sum_{i=1}^{n-n_{\lambda}}w_i=\lap h_0.$$
	
		
Combining all of this with point 3) of Proposition \ref{legendre_prop} we may rewrite Equation \eqref{hum} as
		
	\begin{align}\label{hm}
		\DFhat
		&=-(-2\pi)^{n+1}\sum_{\lambda\in\Lambda}
		\int_{Z_{\lambda}}
		\frac{\iota_{\lambda}^*((\W_0-\mu_0)^{l+1}\wedge(\Ric\W_0+\lap\mu_0)^k\wedge(\theta_\X)_{n-k-l})}{\euler(N_{\lambda}^{\X_0})} \notag
		\\
		&\hspace{25mm}
		-\frac{l+1}{n+1}\muklt
		\int_{Z_{\lambda}}
		\frac{\iota_{\lambda}^*(\W_0-\mu_0)^{n+1}}{\euler(N_{\lambda}^{\X_0})} \notag, \\
		&=2\pi\left((\A_0)_{\text{eq}}^{l+1}\cup c_1(\X_0)_{\text{eq}}^k\cup \Theta_{0,n-k-l} -\frac{l+1}{n+1}\mu_{k,l}(\A_0)^{n+1}_{\text{eq}}\right)(V_0). \notag
	\end{align}
	Now, recalling Proposition \ref{Z-metrics_localised}, we obtain:
	\begin{align}
		\Ima\left(\frac{\DFhat}{Z(X,\alpha)}\right)&=\frac{2\pi}{r(X,\alpha)}\Ima\left(e^{-i\varphi}\left((\A_0)_{\text{eq}}^{l+1}\cup c_1(\X_0)_{\text{eq}}^k\cup \Theta_{0,n-k-l} -\frac{l+1}{n+1}\mu_{k,l}(\A_0)^{n+1}_{\text{eq}}\right)(V_0)\right) \notag\\
		&=-2\pi\mathcal{F}^{Z(\X_0,\A_0)}_{k,l}(V_0),
	\end{align}
	as desired.
\end{proof}

As mentioned in the introduction, an important implication of this proof is that equivariant localisation can be used to derive the $Z$-critical equation directly from the corresponding algebro-geometric stability condition. In particular, this suggests a way of extending the theory to central charges involving higher Chern classes of $(X,\alpha)$, by extending a representative of $c_k(X)$ to an equivariantly closed form and considering its equivariant class. 
	
\printbibliography

\end{document}